\newtheorem{teo}{Theorem}[section]
\newtheorem{prop}[teo]{Proposition}
\newtheorem{lem}[teo]{Lemma}
\newtheorem{coro}[teo]{Corollary}
\newtheorem{defi}[teo]{Definition}
\newtheorem{rem}[teo]{Remark}
\newtheorem{ejem}[teo]{Example}
\def\g{\mathfrak g}
\def\p{\mathfrak p}
\def\s{\mathfrak s}
\def\u{\mathfrak u}
\def\h{\mathcal H}
\def\b{\mathcal B}
\def\bh{{\cal B}({\cal H})}
\def\o{{\cal O}}
\begin{document}

\title{\vspace*{0cm}Decompositions and complexifications of homogeneous spaces}
\date{}
\author{Martin Miglioli\footnote{\textit{e-mail: martin.miglioli@gmail.com}. Supported by Agencia Nacional de Promoción Científica y Tecnológica, and Instituto Argentino de Matemática \textit{Alberto P. Calderón}, CONICET. Saavedra 15, Piso 3, (1083) Buenos Aires, Argentina. Tel/Fax +54 011 4954 6781.}}

\maketitle

%\begin{spacing}{.95}
\abstract{\footnotesize\noindent In this paper an extended CPR decomposition theorem for Finsler symmetric spaces of semi-negative curvature in the context of reductive structures is proven. This decomposition theorem is applied to give a geometric description of the complexification of some infinite dimensional homogeneous spaces.

\medskip

\noindent{\scriptsize{{\bf Keywords:} Banach-Lie group, coadjoint orbit, complexification, Corach-Porta-Recht decomposition, Finsler structure, flag manifold, homogeneous space, operator decomposition, reductive structure, Stiefel manifold.\footnote{MSC2010: 53C30 (Primary) 22E65, 22E66, 47L20 (Secondary).}}}}

%\end{spacing}

\setlength{\parindent}{0cm} %% para que no indente los parrafos nuevos

\section{Introduction}

In recent years, the geometrical study of operator algebras and their homogeneous spaces has become a central topic in the study of infinite dimensional geometry. It is a source of examples and counterexamples, and the operator algebra techniques (Banach algebras and $C^*$ algebras, with their distinguished tools) are being used for obtaining results on abstracts infinite dimensional manifolds by studying their groups of automorphism, isometries, and their associated fiber bundles and $G$-bundles. See the recent book \cite{beltita} by D. Beltita for a full account of these objects and a comprehensive list of references.

In particular, what we are interested in here, is the extension of certain results on the geometric description of complexifications of homogeneous spaces of Banach-Lie groups studied by Beltita and Gal\'e in \cite{beltitagale} and also the decompositions of the acting groups by means of a series of chained reductive structures. 

In Section 2 the reader can find the basic facts about Finsler symmetric spaces; these are spaces of the form $G/U$ endowed with a Finsler structure, where $G$ is a Banach-Lie group and $U$ is the fixed point set of an involution $\sigma$ on $G$.  A criteria that ensures that the spaces $G/U$ have semi-negative curvature is recalled from the work of Neeb \cite{neeb}.

In Section 3 we recall the definition of reductive structures, which can be interpreted as connection forms $E$ on homogeneous spaces of the form $G_A/G_B$. Examples in the context of operator algebras are given: conditional expectations, their restrictions to Schatten ideals and projections to corners of operator algebras. The Corach-Porta-Recht splitting theorem by Conde and Larotonda \cite{condelarotonda} is used to prove an extended CPR-splitting theorem in the context of several reductive structures. 

In Section 4 the CPR splitting theorem is used to give a geometric description of homogeneous spaces of the form $G_A/G_B$ as associated principal bundles over $U_A/U_B$. Under additional hypothesis about the holomorphic character of $G_A$ and the involution $\sigma$ on $G_A$ it is possible to interpret $G_A/G_B$ as the complexification of $U_A/U_B$. Under these additional assumptions $G_A/G_B$ is identified with the tangent bundle of $U_A/U_B$ and it is shown that this identification has nice functorial properties related to the connection form $E$. Finally, we use the three examples of connection forms introduced in Section 3, to give a geometrical description of the complexifications of flag manifolds, coadjoint orbits in Schatten ideals and Stiefel manifolds respectively.

\section{Finsler symmetric spaces}

A connected Banach-Lie group $G$ with an involutive automorphism $\sigma$ is called a \textit{symmetric Lie group}. Let $\g$ be the Banach-Lie algebra of $G$, and let $U=\{g\in G:\sigma(g)=g\}$ be the subgroup of fixed points of $\sigma$. Then the Banach-Lie algebra $\mathfrak{u}$ of $U$ is a closed and complemented subspace of $\g$; a complement is given by the closed subspace
$$\p=\{X\in \g : \sigma_{*1}X=-X\},$$
where for a smooth map between manifolds $f:X\to Y$ we use the notation $f_*:T(X)\to T(Y)$ for the tangent map and $f_{*x}:T_x(X)\to T_{f(x)}(Y)$ for the tangent map at $x\in X$.

The Lie algebra $\u$ is the eigenspace of $\sigma_{*1}$ corresponding to the eigenvalue $+1$ and $\p$ is the eigenspace corresponding to the eigenvalue $-1$. Since $\u$ is complemented $U$ is a Banach-Lie subgroup of $G$, and the quotient space $M=G/U$ has a Banach manifold structure. We denote by $q:G\to M$, $g\mapsto gU$ the quotient map which is a submersion, and by $Exp:\g\to G$ the exponential map of $G$. We use the notation $e^X:=Exp(X)$ for $X\in \g$. 

We also define $G^+ := \{g\sigma(g)^{-1} : g \in G\}$ which is a submanifold of $G$ and note that there is a differential isomorphism $\phi: G/U\to G^+$, $gU\mapsto g\sigma(g)^{-1}$. See Section 5 in Chapter XIII of \cite{lang}. We use the notation $\sigma(g)^{-1}:=g^*$ for $g\in G$.

Let $L_g$ and $R_g$ stand for the left and right translation diffeomorphisms on $G$. For $h\in G$, let $\mu_h:M\to M$, $\mu_h(q(g))=q(hg)=q(L_hg)$. Then
$$(\mu_h)_{*q(g)} q_{*g}=q_{*hg}(L_{h})_{*g}.$$ 
The map $q_{*1}:\p\to T_oM$ is an isomorphism so that a generic vector in $T_{q(g)}M$ will be denoted by $(\mu_g)_{*o}q_{*1} X$ with $X\in \p$.
We use $I_h$ to denote the interior automorphisms of $G$ given by $I_h(g)=hgh^{-1}$, and $Ad_h$ to denote the differential $(I_h)_{*1}$, which is an element of ${\cal B}(\g)$, the bounded linear maps that act on $\g$. We note that $\sigma(I_ue^{tX})=I_u e^{-tX}$ for every $X\in \p$ and $u\in U$, so that $\sigma_{*1}Ad_u X=-Ad_u X$ and $\p$ is $Ad_U$-invariant.
Since $\sigma$ is a group automorphism, $\sigma_{*1}$ is an automorphism of Lie algebras and the following inclusions hold:
$$[\u,\u]\subseteq \u,\quad [\u,\p]\subseteq \p,\quad [\p,\p]\subseteq \u.$$
In particular, $\p$ is $ad_{\u}$-invariant.

A way of giving $M$ the structure of a Finsler manifold is establishing the following norm on the tangent space $T_{q(g)}(M)$ for each $g\in G$
$$\|(\mu_g)_{*o}q_{*1}X\|_{q(g)}:=\|X\|_{\p}$$
where $\|\cdot\|_{\p}$ is any $Ad_U$-invariant norm $\p$ compatible with any norm of $T_{o}(M)$ given by a local chart.
To make the dependence of $M$ with its underlying Banach-Lie group, involution and Finsler structure clear we shall write $M=G/U=Sym(G,\sigma,\|\cdot\|_{\p})$ and we shall call $M$ a \textit{Finsler symmetric space}.

We say that $M=G/U$ has semi-negative curvature if for all $p\in M$ the operator between Banach spaces $(exp_p)_{*x}:T_p(M) \simeq T_x(T_p(M))\to T_{exp_p(x)}(M)$ is expansive and surjective.

What follows is a criteria for semi-negative curvature for Finsler symmetric spaces due to K.-H. Neeb, \cite[Prop. 3.15 and Th. 2.2]{neeb}:

\begin{teo}\label{csn}
Let $M=G/U=Sym(G,\sigma,\|\cdot\|_{\p})$ be a Finsler symmetric space. Then the following conditions are equivalent:
\begin{enumerate}
\item $M$ has semi-negative curvature.
\item The operator $-(ad_X)^2|_{\p}$ is dissipative for all $X\in \p$.
\item The operator $1+(ad_X)^2|_{\p}$ is expansive and invertible for all $X\in \p$.
\item The operator $X\in \p$,  $\frac{\sinh ad_X}{ad_X}|_{\p}$ is expansive and invertible for all $X\in \p$.
\end{enumerate}
\end{teo}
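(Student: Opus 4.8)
The plan is to prove the equivalences $(2)\Leftrightarrow(3)$, $(3)\Leftrightarrow(4)$ and $(1)\Leftrightarrow(4)$. The first two are purely functional-analytic statements about the bounded operator $T:=(ad_X)^2|_{\p}\in{\cal B}(\p)$, and the only structural fact they use is that the family $\{(ad_X)^2|_{\p}:X\in\p\}$ is stable under multiplication by positive scalars, since $(ad_{tX})^2|_{\p}=t^2\,(ad_X)^2|_{\p}$. Hence a statement of the form ``$f\big((ad_X)^2|_{\p}\big)$ is expansive for every $X\in\p$'' is equivalent to ``$f\big(t^2(ad_X)^2|_{\p}\big)$ is expansive for every $X\in\p$ and every $t>0$''. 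The equivalence $(1)\Leftrightarrow(4)$, by contrast, is where the geometry of $M$ enters.

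For $(2)\Leftrightarrow(3)$: by the Lumer--Phillips characterization for bounded operators, $-T$ is dissipative if and only if $\|(I+\lambda T)x\|\geq\|x\|$ for all $\lambda>0$ and all $x\in\p$; by the scaling remark this holds for every $X\in\p$ exactly when $I+(ad_X)^2|_{\p}$ is expansive for every $X\in\p$, which is the expansiveness part of $(3)$. The invertibility part of $(3)$ is then automatic: a dissipative bounded operator has numerical range in the closed left half-plane, so its spectrum lies in $\{\mathrm{Re}\leq 0\}$; thus the spectrum of $(ad_X)^2|_{\p}$ lies in $\{\mathrm{Re}\geq 0\}$, does not contain $-1$, and $I+(ad_X)^2|_{\p}$ is invertible. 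This gives $(2)\Leftrightarrow(3)$.

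For $(3)\Leftrightarrow(4)$: I would use the Weierstrass factorization $\frac{\sinh z}{z}=\prod_{n\geq 1}\big(1+\frac{z^2}{n^2\pi^2}\big)$, which for $z=ad_X$ converges in operator norm because $ad_X$ is bounded and which on $\p$ reads $\frac{\sinh ad_X}{ad_X}|_{\p}=\prod_{n\geq 1}\big(I+(ad_{X/n\pi})^2|_{\p}\big)$. Assuming $(3)$, each factor is expansive and invertible; a norm-convergent product of expansive operators is again expansive (since $\|ABx\|\geq\|Bx\|\geq\|x\|$), and the product of the inverses of the factors also converges in norm and provides a two-sided inverse, so $(4)$ holds. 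Conversely, assume $(4)$ and set $h(w):=\sum_{k\geq 0}\frac{w^k}{(2k+1)!}=1+\tfrac16 w+O(w^2)$, so that $\frac{\sinh ad_{tX}}{ad_{tX}}|_{\p}=h\big(t^2T\big)$. For fixed $x\in\p$, expansiveness of $h(t^2T)$ for all $t>0$ together with the expansion $h(t^2T)=I+\tfrac{t^2}{6}T+O(t^4)$ yields $\|(I+sT)x\|\geq\|x\|-C(x)s^2$ for $s=t^2/6$ small; since $s\mapsto\|(I+sT)x\|$ is convex and equals $\|x\|$ at $s=0$, this forces its right derivative at $0$ to be nonnegative, hence the function is nondecreasing on $[0,\infty)$ and $\|(I+sT)x\|\geq\|x\|$ for all $s>0$. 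Thus $-T$ is dissipative, i.e.\ $(2)$, and hence $(3)$, holds.

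For $(1)\Leftrightarrow(4)$, which I expect to be the main obstacle: since the Finsler metric on $M=G/U$ was set up so that each $\mu_h$ is an isometry and $q_{*1}:\p\to T_oM$ is isometric, and since the $\mu_h$ carry the metric exponential at one point to that at another, it suffices to treat the base point $o$. The geodesics through $o$ are the curves $t\mapsto q(e^{tX})$ with $X\in\p$, so $exp_o$ is the map $X\mapsto q(e^{X})$; differentiating a two-parameter variation $q\big(e^{s(X+tY)}\big)$ produces the Jacobi equation along such a geodesic, whose curvature operator is $-(ad_X)^2|_{\p}$ by the bracket relations $[\p,\p]\subseteq\u$, $[\u,\p]\subseteq\p$, and solving this linear ordinary differential equation in ${\cal B}(\p)$ identifies $(exp_o)_{*X}$ with $\frac{\sinh ad_X}{ad_X}|_{\p}$ up to the isometries $q_{*1}$ and $(\mu_{e^X})_{*o}$. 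Consequently $(exp_o)_{*X}$ is expansive precisely when $\frac{\sinh ad_X}{ad_X}|_{\p}$ is, and surjective precisely when the latter is; since an expansive bounded operator is injective with closed range, surjectivity amounts here to invertibility, so $(1)\Leftrightarrow(4)$ follows. The delicate point is justifying the identification of $(exp_o)_{*X}$ with this operator series in the Banach--Finsler category, where the curvature is not a tensor field and one must pass carefully from the explicit one-parameter-subgroup geodesics to a norm-convergent operator series; this is precisely the content of \cite[Prop.~3.15]{neeb}, which I would either invoke directly or reprove by the variational computation just sketched.
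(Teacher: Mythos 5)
The paper itself offers no proof of this theorem: it is recalled verbatim from Neeb \cite[Prop.~3.15 and Th.~2.2]{neeb}, so there is no internal argument to compare yours against, and your attempt should be judged as a reconstruction of Neeb's proof. On that score it is essentially sound. The chain $(2)\Leftrightarrow(3)\Leftrightarrow(4)$ is correct as you present it: the scaling identity $(ad_{tX})^2|_{\p}=t^2(ad_X)^2|_{\p}$ legitimately converts the Lumer--Phillips resolvent characterization of dissipativity into expansiveness of $1+(ad_X)^2|_{\p}$ for all $X$; the spectral argument for invertibility (bounded dissipative $\Rightarrow$ spectrum in the closed left half-plane) is valid in the Banach-space setting; the Weierstrass product $\frac{\sinh z}{z}=\prod_{n\geq1}\bigl(1+\tfrac{z^2}{n^2\pi^2}\bigr)$ does converge in operator norm for bounded $ad_X$, the factors commute, each inverse is a contraction, and the product of inverses converges, so $(3)\Rightarrow(4)$ goes through; and the convexity argument for $(4)\Rightarrow(2)$ (convex $s\mapsto\|(1+sT)x\|$ with value $\|x\|$ at $0$ and lower bound $\|x\|-Cs^2$ forces nonnegative right derivative, hence monotonicity) is correct. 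The only soft spot is $(1)\Leftrightarrow(4)$, which you candidly flag: the "Jacobi equation" variational sketch is the part that needs genuine work in the Banach--Finsler category, and invoking Neeb's Prop.~3.15 there makes your argument rest on exactly the reference the paper cites for the whole theorem. If you want to make that step self-contained, the cleaner route is not via Jacobi fields but via the closed formula for the differential of the group exponential, $Exp_{*X}=(L_{e^X})_{*1}\circ\frac{1-e^{-ad_X}}{ad_X}$, combined with the parity splitting $\g=\u\oplus\p$ (even powers of $ad_X$ preserve $\p$, odd powers map $\p$ into $\u$, which $q_{*1}$ kills) and the fact that geodesics of the canonical connection through $o$ are $t\mapsto q(e^{tX})$; this yields directly $(q\circ Exp)_{*X}Y=(\mu_{e^X})_{*o}q_{*1}\frac{\sinh ad_X}{ad_X}\big|_{\p}Y$ with $(\mu_{e^X})_{*o}q_{*1}$ a surjective isometry, after which your observation that expansive plus surjective equals expansive plus invertible finishes the equivalence.
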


\begin{ejem}\label{calg}
If $A$ is a unital $C^*$-algebra, $G$ is the group of invertible elements of $A$ endowed with the manifold structure given by the norm and $\sigma:G\to G$, $g\mapsto (g^{-1})^*$, then $U=\{g\in G:\sigma(g)=g\}$ is the group of unitary operators of $A$. In this case $\p=A_s$ the set of self-adjoint elements of $A$ and the uniform norm on $A_s$ which we denote by $\|\cdot\|$ is $Ad_U$-invariant because it is unitarily invariant. We can identify the manifold $G/U$ with the manifold of positive invertible elements $G^+$. It was proven in \cite{cpr} that the manifold $M=G/U=Sym(G,\sigma,\|\cdot\|)$ has semi-negative curvature.
\end{ejem}

\begin{ejem}\label{ideal}
Let $A=\bh$ stand for the set of bounded linear operators on a separable complex Hilbert space ${\cal H}$, with the uniform norm denoted by $\|\cdot\|$. Let $A_p$ be the ideal of $p$-Schatten operators with $p$-norm $\|\cdot\|_p$. Let $G_p$ stand for the group of invertible operators in the unitized ideal, that is $G_p=\{g\in A^{\times}: \;g-1\in A_p\}$, then $G_p$ is a Banach-Lie group (one of the so-called classical Banach-Lie groups \cite{harpe}), and $A_p$ identifies with its Banach-Lie algebra. Consider the involutive automorphism $\sigma:G_p \to G_p$ given by $g\mapsto (g^{*})^{-1}$. Let $U_p\subseteq G_p$ stand for the unitary subgroup of fixed points of $\sigma$. In this case $\p$ is the set of self-adjoint operators in $A_p$ and the norm $\|\cdot\|_p$ on $\p$ is $Ad_{U_p}$-invariant. We can identify the manifold $G_p/U_p$ with the manifold of positive invertible operators in $G_p$. It was proven in Section 5 of \cite{condelarotonda} that the manifold $M_p=G_p/U_P=Sym(G_p,\sigma,\|\cdot\|_p)$ is simply connected and has semi-negative curvature.
\end{ejem}

\section{Splitting of Finsler symmetric spaces}

We recall some facts about the fundamental group of $M$ and polar decompositions \cite[Th. 3.14 and Th. 5.1]{neeb}

\begin{teo}\label{polar}
Let $M=G/U=(G,\sigma,,\|\cdot\|_{\p})$ be a Finsler symmetric space of semi-negative curvature, then
\begin{enumerate}
\item The exponential map $q\circ Exp:\p\to M$ is a covering of Banach manifolds and
$$\Gamma=\{X\in \p: q(e^X)=q(1)\}$$
is a discrete and additive subgroup of $\p \cap Z(\g)$, with $\Gamma\simeq \pi_1(M)$ and $M\simeq \p/\Gamma$. $Z(\g)$ denotes the center of the Banach-Lie algebra $\g$. If $X,Y\in\p$ and $q(e^X)=q(e^Y)$, then $X-Y\in\Gamma$.
\item The polar map 
$$m:\p\times U \to G, \quad (X,u)\mapsto  e^X u$$ 
is a surjective covering whose fibers are given by the sets $\{ (X-Z,e^Z u):\, \, Z \in \Gamma \, \}$, $u\in U$, $X\in \p$. If $M$ is simply connected the map $m$ is a diffeomorphism.
\end{enumerate}
\end{teo}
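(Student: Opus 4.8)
The plan is to deduce both statements from Theorem \ref{csn} together with standard covering-space theory. For part (1), the key point is that semi-negative curvature means precisely that each $(\exp_p)_{*x}$ is expansive and surjective, hence a linear isomorphism with bounded inverse; applied at the base point $o=q(1)$, this says that the differential of $q\circ Exp:\p\to M$ is an isomorphism at every point of $\p$ (using $Ad$-invariance to transport the statement at $o$ along geodesics, as in Neeb's argument). So $q\circ Exp$ is a local diffeomorphism. To upgrade this to a covering map one shows it has the path-lifting property: given a path in $M$ one lifts it using the local-diffeomorphism property and uses completeness of $\p$ together with the expansiveness bound to prevent the lift from escaping to infinity in finite time. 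Once $q\circ Exp$ is a covering, $\Gamma:=(q\circ Exp)^{-1}(o)$ is its fibre over the base point; since $q\circ Exp$ is a group-like exponential map intertwining translation, $\Gamma$ is a discrete subgroup of $(\p,+)$, and the deck-transformation argument identifies $\Gamma\simeq\pi_1(M)$ and $M\simeq\p/\Gamma$. That $\Gamma\subseteq Z(\g)$ follows because for $Z\in\Gamma$ the one-parameter group $e^{tZ}$ projects to a loop at $o$ for all $t$ (discreteness forces $tZ\in\Gamma$ only for... actually, one argues $e^Z\in U$ and $e^Z$ is central: conjugating, $I_g e^Z = e^{Ad_g Z}$ still lies over $o$, and connectedness of $G$ plus discreteness of $\Gamma$ forces $Ad_g Z=Z$ for all $g$, whence $Z\in Z(\g)$). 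The last sentence of (1) is immediate: if $q(e^X)=q(e^Y)$ then $e^X$ and $e^Y$ differ by right multiplication by an element of $U$, and comparing with the covering structure gives $X-Y\in\Gamma$.

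For part (2), I would first observe that $m(X,u)=e^X u$ fits into the commuting diagram with $q\circ Exp$: namely $q\circ m(X,u)=q(e^X)=(q\circ Exp)(X)$, so $m$ is a bundle map over the covering $q\circ Exp$. The map $m$ is surjective because every $g\in G$ has a polar decomposition $g=e^X u$ — this is exactly the content of the diffeomorphism $\phi:G/U\to G^+$, $gU\mapsto g\sigma(g)^{-1}=gg^*$ recalled in Section 2 (so $g g^*=e^{2X}$ determines $e^X$, hence $X$ up to $\Gamma$, and then $u=e^{-X}g\in U$). To see $m$ is a local diffeomorphism, compute its differential: at $(X,u)$ it is built from $(q\circ Exp)_{*X}$ in the $\p$-direction (an isomorphism by part (1)) and right translation by $u$ in the $U$-direction, and these two pieces span complementary subspaces of $T_{e^X u}G$ because $\p$ and $\u$ are complementary in $\g$; one checks the resulting block-triangular map is invertible. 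A local diffeomorphism that is surjective and proper (or one directly verifies the path-lifting property, transported from that of $q\circ Exp$) is a covering. Finally the fibre over $e^X u$: if $e^{X'}u'=e^X u$ then applying $\phi$ gives $e^{2X'}=e^{2X}$, so $X'=X-Z$ for some $Z\in\Gamma$ (using part (1), noting $2X'-2X\in\Gamma$ and $\Gamma$ is a group; a small argument with $\tfrac12\Gamma$ versus $\Gamma$ is needed here, or one works directly with $q(e^{X'})=q(e^X)$), and then $u'=e^{-X'}e^X u=e^{Z}u$ — giving exactly the stated fibre $\{(X-Z,e^Z u):Z\in\Gamma\}$. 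When $M$ is simply connected, $\Gamma=0$, the fibres are singletons, and a bijective local diffeomorphism is a diffeomorphism.

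The main obstacle I anticipate is the passage from "local diffeomorphism" to "covering map" — i.e., the completeness/properness argument ensuring global path-lifting. This is where the quantitative expansiveness in Theorem \ref{csn}(3)–(4) (the operator $\tfrac{\sinh ad_X}{ad_X}$ being invertible with the expansive estimate) is essential: it gives a uniform lower bound on $\|(q\circ Exp)_{*X}\|$ along any lifted path, so a path of finite length in $M$ lifts to a path of finite length in the complete space $\p$, which therefore does not run off to infinity. For $m$, the analogous issue is handled by reducing to the $q\circ Exp$ case via the diagram, since the $U$-fibre direction is a genuine (right-translation) trivial factor and contributes no incompleteness. The remaining steps — the block-triangular invertibility of $m_{*}$, the identification of fibres, and the centrality of $\Gamma$ — are then routine given the structural facts $[\u,\p]\subseteq\p$, $[\p,\p]\subseteq\u$ and the diffeomorphism $\phi$ recalled in Section 2.
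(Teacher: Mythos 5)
You should first note that the paper contains no proof of this theorem at all: it is quoted from Neeb \cite[Th.\ 3.14 and Th.\ 5.1]{neeb}, so what you are really sketching is Neeb's Cartan--Hadamard theorem, and your architecture (local diffeomorphism $+$ expansiveness $+$ completeness $\Rightarrow$ covering) is indeed his. The difficulty is that the parts of the statement which are genuinely delicate are exactly the ones you assert rather than prove. In part (1), the additivity of $\Gamma$, the inclusion $\Gamma\subseteq Z(\g)$, and the implication $q(e^X)=q(e^Y)\Rightarrow X-Y\in\Gamma$ do not follow from covering-space generalities: the fibre of a covering over the base point carries no a priori group structure, and the last implication is equivalent to identifying the deck transformation group with the translations by $\Gamma$, which is precisely the content of Neeb's Th.\ 3.14, not a corollary of it. Your centrality argument is moreover flawed as written: for $Z\in\Gamma$ and a general $g\in G$ the element $Ad_gZ$ need not lie in $\p$, so ``$I_ge^Z$ still lies over $o$'' has no meaning; conjugating by $u\in U$ does give $Ad_uZ\in\Gamma$ and hence, by discreteness and connectedness of $U$, $[\u,Z]=0$, but killing $[\p,Z]$ requires a separate argument which is missing.

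In part (2) there are two further gaps. Surjectivity of $m$ via $\phi$ is circular: the assertion that $gg^*=e^{2X}$ with $X\in\p$ (i.e.\ $G^+=e^{\p}$) is deduced in the paper (Cor.\ \ref{exppos}) \emph{from} the polar decomposition, not from $\phi$ being a diffeomorphism; the correct and easy route is to use surjectivity of the covering $q\circ Exp$ from part (1), which gives $gU=e^XU$ and hence $g=e^Xu$. Second, the claim that $m_{*(X,u)}$ is invertible ``because $\p$ and $\u$ are complementary in $\g$'' is unjustified: the derivative of $X\mapsto e^X$ at $X$ involves $\frac{1-e^{-ad_X}}{ad_X}$, and the invertibility of the resulting block map on $\p\times\u$ is exactly where the semi-negative-curvature hypothesis (the invertibility and expansivity of $\frac{\sinh ad_X}{ad_X}|_{\p}$ in Th.\ \ref{csn}) must enter; without it the statement is false, as the polar map of a compact-type symmetric pair fails to be an immersion. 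A cleaner way to finish, avoiding both the block computation and the properness discussion, is to observe that $(X,u)\mapsto(X,e^Xu)$ identifies $\p\times U$ with the pullback of the covering $q\circ Exp:\p\to M$ along the submersion $q:G\to M$, so that $m$ is a covering because pullbacks of coverings are coverings; the fibre description then follows as in your ``direct'' variant ($q(e^{X'})=q(e^X)$ gives $X'=X-Z$ with $Z\in\Gamma$, and centrality of $Z$ gives $u'=e^Zu$), and the simply connected case gives $\Gamma=\{0\}$ and a bijective covering, hence a diffeomorphism.
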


In the context of $C^*$-algebras (Example \ref{calg}), since $G/U$ is simply connected and has semi-negative curvature we get the usual polar decomposition of invertible elements as a product of a positive invertible element and a unitary.

\begin{coro}\label{exppos}
In the context of the previous theorem $G_A^+ = e^{\p}$. Note that given $h\in G_A^+$ there is a $g\in G_A$ such that $h=g\sigma(g)^{-1}$. Using the polar decomposition in $G_A$ there are $X\in \p$ and $u\in U$ such that $g=e^Xu$. Then $h=e^Xu\sigma(e^Xu)^{-1}=e^Xuu^{-1}e^X=e^{2X}\in e^{\p}$. We note also that $e^X=e^{\frac{1}{2}X}\sigma(e^{\frac{1}{2}X})^{-1}\in G_A^+$ for every $X\in \p$.
\end{coro}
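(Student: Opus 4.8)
The plan is to establish the two inclusions $G_A^+\subseteq e^{\p}$ and $e^{\p}\subseteq G_A^+$ separately, using as the only substantial input the polar decomposition of Theorem~\ref{polar}, together with the elementary identity $\sigma(e^X)=e^{-X}$ valid for $X\in\p$ (which holds because $\sigma$ is a group homomorphism and $\sigma_{*1}X=-X$, so $\sigma(e^X)=e^{\sigma_{*1}X}=e^{-X}$).

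For $G_A^+\subseteq e^{\p}$ I would take an arbitrary $h\in G_A^+$ and write $h=g\sigma(g)^{-1}$ for some $g\in G_A$ by the very definition of $G_A^+$. Applying the surjectivity of the polar map $m\colon\p\times U\to G_A$ from Theorem~\ref{polar}, I factor $g=e^X u$ with $X\in\p$, $u\in U$. Since $\sigma(u)=u$ and $\sigma(e^X)=e^{-X}$, one gets $\sigma(g)=e^{-X}u$, hence $\sigma(g)^{-1}=u^{-1}e^{X}$, and therefore $h=e^X u\,u^{-1}e^{X}=e^{2X}\in e^{\p}$, because $2X\in\p$.

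For the reverse inclusion $e^{\p}\subseteq G_A^+$, given $X\in\p$ it suffices to exhibit a $g\in G_A$ with $e^X=g\sigma(g)^{-1}$; the choice $g=e^{X/2}$ works, since $\sigma(e^{X/2})^{-1}=(e^{-X/2})^{-1}=e^{X/2}$ gives $g\sigma(g)^{-1}=e^{X/2}e^{X/2}=e^X$. A more conceptual route to the whole statement is to combine the surjection $q\circ Exp\colon\p\to G_A/U$ of Theorem~\ref{polar}(1) with the differential isomorphism $\phi\colon G_A/U\to G_A^+$, $gU\mapsto g\sigma(g)^{-1}$: the composite sends $X$ to $e^X\sigma(e^X)^{-1}=e^{2X}$ and is surjective, so $G_A^+=\{e^{2X}:X\in\p\}=e^{\p}$ in one stroke, using that $X\mapsto 2X$ is a bijection of $\p$.

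I do not expect a genuine obstacle here: the corollary is essentially a direct reading of the polar decomposition for Finsler symmetric spaces of semi-negative curvature. The only two points deserving a line of care are the computation $\sigma(e^X)=e^{-X}$ (so that the $u$-factors cancel correctly), and the observation that only surjectivity of $m$ — not its injectivity — is used, which is why the argument is valid whether or not $M$ is simply connected.
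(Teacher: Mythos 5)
Your proof is correct and follows essentially the same argument the paper gives (which is in fact embedded in the statement of the corollary itself): factor $h=g\sigma(g)^{-1}$, apply the polar decomposition $g=e^Xu$ with $\sigma(e^X)=e^{-X}$ to get $h=e^{2X}$, and exhibit $e^X=e^{X/2}\sigma(e^{X/2})^{-1}$ for the reverse inclusion. Your closing remarks — that only surjectivity of the polar map is needed, plus the alternative route via $\phi\circ(q\circ Exp)$ — are accurate but not different in substance from the paper's reasoning.
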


The following decomposition theorem in the context of Finsler symmetric spaces of semi-negative curvature was proven by Conde and Larotonda in \cite{condelarotonda}.

\begin{teo}Corach-Porta-Recht decomposition (CPR)\label{cpr}

Let $M=G/U=(G,\sigma,,\|\cdot\|_{\p})$ be a simply connected Finsler symmetric space of semi-negative curvature.
Let $p\in {\cal B}(\p)$ be an idempotent, $p^2=p$. Let $\s:=Ran(p)$, $\s':=Ran(1-p)=Ker(p)$, so that $\p=\s\oplus\s'$. If $ad^2_{\s}(\s)\subseteq \s$, $ad^2_{\s}(\s')\subseteq \s'$ and $\|p\|=1$, then the maps
$$\Phi: U \times \s'\times \s \to G, \qquad (u,X,Y)\mapsto ue^Xe^Y$$
$$\Psi: \s'\times \s \to G^+, \qquad (X,Y) \mapsto e^Ye^{2X}e^Y$$
are diffeomorphisms.
\end{teo}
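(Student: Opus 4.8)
The plan is to derive the statement for $\Phi$ from the one for $\Psi$ by a polar bookkeeping, and then to recognise $\Psi$ as, essentially, the normal exponential map of a totally geodesic submanifold of $G^+$, which is a diffeomorphism by the Cartan-Hadamard-type globalisation available in semi-negative curvature. For the reduction: writing $g^\ast:=\sigma(g)^{-1}$ and using $\sigma(u)=u$, $\sigma(e^X)=e^{-X}$, $\sigma(e^Y)=e^{-Y}$, one checks directly that if $g=\Phi(u,X,Y)=ue^Xe^Y$ then $g^\ast g=e^Ye^{2X}e^Y=\Psi(X,Y)$. Hence, once $\Psi:\s'\times\s\to G^+$ is known to be a diffeomorphism, $\Phi$ is invertible: given $g\in G$ set $(X,Y):=\Psi^{-1}(g^\ast g)$ and $u:=ge^{-Y}e^{-X}$; then $u^\ast u=e^{-X}e^{-Y}(g^\ast g)e^{-Y}e^{-X}=1$, so $u\in U$, $\Phi(u,X,Y)=g$, the triple $(u,X,Y)$ is the only preimage, and all the maps involved are smooth. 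So it suffices to prove that $\Psi$ is a diffeomorphism onto $G^+$.

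Since $M$ is simply connected, Theorem~\ref{polar} gives $\Gamma=\{0\}$, so by Corollary~\ref{exppos} the exponential restricts to a real-analytic diffeomorphism $\p\to G^+=e^\p$; write $\log$ for its inverse. The two hypotheses $ad^2_{\s}(\s)\subseteq\s$ and $ad^2_{\s}(\s')\subseteq\s'$ are together equivalent to $[\,p,\,ad_Z^2\,]=0$ for every $Z\in\s$, hence to $p$ commuting with every even power series in $ad_Z$, in particular with $\frac{\sinh ad_Z}{ad_Z}\big|_\p$. A short polarisation of $ad^2_{\s}(\s)\subseteq\s$, together with the Jacobi identity, gives $[[\s,\s],\s]\subseteq\s$: that is, $\s$ is a Lie triple system, so $e^{\s}=Exp(\s)$ is a closed, complemented, complete, totally geodesic submanifold of $G^+$, which with the restricted norm is again a Finsler symmetric space of semi-negative curvature (dissipativity in Theorem~\ref{csn}(2) passes to the $ad_\s$-invariant subspace $\s$). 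Now
$$\Psi(X,Y)=e^Ye^{2X}e^Y$$
is, after the linear reparametrisation $(X,Y)\mapsto(Y,2X)$, exactly the normal exponential map of $e^{\s}$ inside $G^+$, the condition $\|p\|=1$ being what singles out $\s'=\ker p$ as the space of normal directions. Thus the theorem is the assertion that the normal exponential map of a complete totally geodesic submanifold of a simply connected Finsler symmetric space of semi-negative curvature is a global diffeomorphism.

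To prove this one shows that $\Psi$ is (i) a local diffeomorphism and (ii) a bijection onto $G^+$; then (iii) a bijective local diffeomorphism is a diffeomorphism. For (i) one computes $D\Psi_{(X,Y)}$ from the differential-of-exponential formula and, after translating $T_{\Psi(X,Y)}G^+$ back to $\p$ via $\log$, finds a block operator on $\s'\oplus\s$ whose diagonal blocks are, up to invertible translation factors, restrictions to $\s'$ and $\s$ of operators $\frac{\sinh ad_Z}{ad_Z}$; these are expansive and invertible by Theorem~\ref{csn}(4) and respect the splitting because $p$ commutes with $ad_Z^2$, and $\|p\|=1$ then forces the full block operator to be invertible. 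For (ii), semi-negative curvature makes $Q\mapsto d_{G^+}(P,Q)$ a convex function, so its restriction to the complete, convex set $e^{\s}$ has a unique minimiser $e^{2Y}$ — existence via the standard metric argument for nearest-point projections in nonpositive curvature — and the first-order condition at this minimiser, together with $\|p\|=1$, forces $\log(e^{-Y}Pe^{-Y})\in\s'$; setting $X:=\frac12\log(e^{-Y}Pe^{-Y})$ then exhibits $(X,Y)$ as the unique solution of $\Psi(X,Y)=P$. The step I expect to be the real obstacle is the twofold use of $\|p\|=1$ — converting the contractivity of $p$ into the invertibility of $D\Psi$ in (i), and into the ``perpendicularity'' statement in (ii) — combined with making the Jacobi-field comparison of semi-negative curvature work in the Banach-Finsler rather than Hilbert setting; this is exactly the point at which the Corach-Porta-Recht inequality packaged in Theorem~\ref{csn} does the essential work, and it is the content of the theorem of Conde and Larotonda cited above.
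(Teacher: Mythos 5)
Two framing remarks first. The paper itself contains no proof of Theorem \ref{cpr}: it is quoted from Conde and Larotonda \cite{condelarotonda} and used as a black box in Theorem \ref{descpolext}, so the only meaningful comparison is with that cited source. Within your outline, the reduction of the $\Phi$-statement to the $\Psi$-statement via $g^*g=\Psi(X,Y)$ is correct and cleanly done (the paper's own extended theorem argues in the opposite direction, deducing the $\Psi_n$-statement from the $\Phi_n$-statement; either direction is fine once Theorem \ref{polar} is in hand), and the geometric picture of $e^{\s}$ as a totally geodesic submanifold with $\Psi$ as its ``normal exponential'' is the right one and matches the spirit of the Porta--Recht/Conde--Larotonda arguments and the Mostow-type fibration mentioned later in the paper.

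The difficulty is that the two steps carrying the entire weight of the theorem are asserted rather than proved. In (ii), the ``standard metric argument for nearest-point projections in nonpositive curvature'' is a CAT(0)/semi-parallelogram-law argument; semi-negative curvature in this paper (Theorem \ref{csn}) means only expansivity of the exponential in a Banach--Finsler norm, and the model norms covered by the theorem (e.g.\ the operator norm of Example \ref{calg}) are neither strictly nor uniformly convex, so existence and uniqueness of metric projections onto closed geodesically convex sets is not available as a standard fact here; moreover the ``first-order condition'' that is supposed to convert $\|p\|=1$ into $\log(e^{-Y}Pe^{-Y})\in\s'$ is never derived, and in a non-smooth Finsler norm this is delicate. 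These points are precisely the hard content of the theorem, not routine steps. Likewise, in (i) the invertibility of $D\Psi$ is not established: expansivity of the diagonal blocks says nothing about the off-diagonal blocks, and ``$\|p\|=1$ then forces the full block operator to be invertible'' is not an argument. Your closing sentence then defers exactly these points to ``the theorem of Conde and Larotonda'' --- which is the statement being proved --- so as it stands the proposal is a plausible strategy sketch, not a proof. A smaller slip: polarising $ad_Z^2(\s)\subseteq\s$ only yields that the anticommutators $ad_{Z_1}ad_{Z_2}+ad_{Z_2}ad_{Z_1}$ preserve $\s$, while the Jacobi identity needs the commutator $ad_{[Z_1,Z_2]}=ad_{Z_1}ad_{Z_2}-ad_{Z_2}ad_{Z_1}$; under the intended reading $ad^2_{\s}=ad_{\s}\circ ad_{\s}$ the Lie-triple-system property is immediate and no polarisation is needed.
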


The following two definitions are from Beltita and Gal\'e \cite{beltitagale2}.

\begin{defi} \label{estred}
A \textit{\textbf{reductive structure}} is a triple $(G_A,G_B;E)$ where $G_A$ is a real or complex connected Banach-Lie group with Banach-Lie algebra $\mathfrak{g}_A$, $G_B$ is a connected Banach-Lie subgroup of $G_A$ with Banach-Lie algebra $\mathfrak{g}_B$, and $E: \mathfrak{g}_A \to \mathfrak{g}_A$ is a $\mathbb{R}$-linear continuous transformation which satisfies the following properties: $E \circ E = E$; $RanE = \mathfrak{g}_B$, and for every $g \in G_B$ the diagram
$$
\xy
\xymatrix{
\mathfrak{g}_A \ar[d]_{Ad_g} \ar[r]^{E} & \mathfrak{g}_B \ar[d]^{Ad_g}
\\
\mathfrak{g}_A \ar[r]^{E} & \mathfrak{g}_B
}\endxy
$$
commutes.
\end{defi}

\begin{defi}
A \textit{\textbf{morphism of reductive structures}} from $(G_A,G_B;E)$ to $(\tilde{G_A},\tilde{G_B};\tilde{E})$ is a homomorphism of Banach-Lie groups $\alpha: G_A \to \tilde{G_A}$ such that $\alpha (G_B) \subseteq \tilde{G_B}$ and such that the diagram
$$
\xy
\xymatrix{
\mathfrak{g}_A \ar[d]_{\alpha_{*1}} \ar[r]^{E} & \mathfrak{g}_B \ar[d]^{\alpha_{*1}}
\\
\tilde{\mathfrak{g}_A} \ar[r]^{\tilde{E}} & \tilde{\mathfrak{g}_B}
}\endxy
$$
commutes. 

For example, a family of automorphisms of any reductive structure $(G_A,G_B;E)$ is given by $\alpha_g : x \mapsto gxg^{-1}$, $G_A \to G_A$, $(g \in G_B)$.
\end{defi}

Now we introduce involutions in reductive structures:

\begin{defi}
If  $(G_A,G_B;E)$ is a reductive structure and $\sigma$ is an involutive morphism of reductive structures we call  $(G_A,G_B;E,\sigma)$ a \textit{\textbf{reductive structure with involution}}. If $(G_A,G_B;E,\sigma)$ and $(\tilde{G_A},\tilde{G_B};\tilde{E},\tilde{\sigma})$ are reductive structures with involution and $\alpha$ is a morphism of reductive structures from $(G_A,G_B;E)$ to $(\tilde{G_A},\tilde{G_B};\tilde{E})$ such that $\alpha \circ \sigma=\tilde{\sigma} \circ \alpha$ then we call $\alpha$ a \textit{\textbf{morphism of reductive structures with involution}} from $(G_A,G_B;E,\sigma)$ to $(\tilde{G_A},\tilde{G_B};\tilde{E},\tilde{\sigma})$.
\end{defi}

\begin{ejem}\label{condexp} Conditional expectations in $C^*$-algebras

Let $A$ and $B$ be two unital $C^*$-algebras, such that $B$ is a subalgebra of $A$ and let $E:A\to B$ be a conditional expectation. This means that $E$ is a linear projection on $A$ with $RanE=B$, $E(1_A)=1_B(=1_A)$ and norm $1$. By Tomiyama's theorem \cite{tomi} the following holds
$$E(b_1ab_2)=b_1E(a)b_2 \qquad \textrm{for all $a\in A;\quad b_1,b_2\in B$}$$
$$E(a^*)=E(a)^* \qquad \textrm{for all $a\in A$}.$$
Let $G_{\Lambda}$ for ${\Lambda}\in \{A,B\}$ be the Banach-Lie group of invertible operators in ${\Lambda}$ endowed with the topology given by the uniform norm. Then the Banach-Lie algebra of $G_{\Lambda}$ is $\g_{\Lambda}={\Lambda}$. Since in this case we have $Ad_g(a)=gag^{-1}$ for each $g\in G_A$ and $a\in A$, the expectation $E$ satisfies the conditions of Def. \ref{estred}, so that $(G_A,G_B;E)$ is a reductive structure. In fact, this is a classical example that was the motivation of that definition in the paper \cite{beltitagale2}.

If $(G_A,G_B;E)$ is a reductive structure that is derived from an inclusion of $C^*$-algebras and a conditional expectation as above then $\sigma:G_A\to G_A$, $a\mapsto (a^{-1})^*$ defines an involutive morphism of reductive structures since $\sigma_{*1}:A\to A$, $a\mapsto -a^*$ and 
$$
E(\sigma_{*1}(a))=E(-a^*)=-E(a)^*=\sigma_{*1}(E(a)),
$$
therefore $(G_A,G_B;E,\sigma)$ is a reductive structure with involution.

If for two triples $(A,B;E)$, $(\tilde{A},\tilde{B};\tilde{E})$ there is a bounded homomorphism $\phi:A\to \tilde{A}$ which satisfies $\phi \circ E=\tilde{E}\circ \phi$ then $\alpha := \phi|_{G_A}$ defines a morphism of reductive structures with involution from $(G_A,G_B;E,\sigma)$ to $(\tilde{G_A},\tilde{G_B};\tilde{E},\tilde{\sigma})$.
\end{ejem}

\begin{ejem}\label{idealcond}
We use the notation of Example \ref{ideal}. Let $B\subseteq A=\b(\h)$ be a $C^*$-subalgebra, and let $E:A\to B$ be a conditional expectation with range $A$ such that $E$ sends trace-class operators to trace-class operators and $E$ is compatible with the trace, that is $Tr(E(x))=Tr(x)$ for any trace-class operator $x\in A$. Let $p\geq 1$, $B_p=B\cap A_p$, 
$$
G_{A,p}=\{g\in A^{\times}:g-1\in A_p\}\; \textrm{ and }\; G_{B,p}=\{g\in A^{\times}:g-1\in B_p\}.
$$
Then $\g_{A,p}:=A_p$ and $\g_{B,p}=B_p$ are the Banach-Lie algebras of $G_{A,p}$ and $G_{B,p}$ respectively. It was proven in Section 5 of \cite{condelarotonda} that $E_p=E\vrule_{A_p}:A_p \to B_p$ and that $\|E_p\|=1$. It easy to see that $(G_{A,p},G_{B,p}; E_p,\sigma)$ is a reductive structure with involution.
\end{ejem}

\begin{ejem}\label{corners} Corners

Let $\h$ be a Hilbert space, $n\geq1$ and $p_i$, $i=1,\dots ,n+1$ be pairwise orthogonal non-zero projections with range $\h_i$ and $\sum_{i=1}^{n-1}p_i =1$. Let $G_A$ be the group of invertible elements of $\b(\h)$ and let  
$$
G_B=
\left\{ \begin{pmatrix}
g_1    & 0      & \hdots & 0    &   0 \\
0      & g_2    & \hdots &  0   &   0 \\
\vdots & \vdots & \ddots & \vdots  &  \vdots \\
0      &  0     &  \hdots   &  g_n  & 0   \\
 0      &   0     &     \hdots      &   0    &  1
\end{pmatrix} :g_i \mbox{ invertible in } \b(\h_i) \mbox{ for } i=1,\dots ,n \right\};
$$
where we write operators in $\b(\h)=\b(\h_1\oplus \ldots \oplus \h_{n+1})$ as $(n+1)\times (n+1)$ matrices with the corresponding operator entries.  

In this case $\g_A=\b(\h)$ and 
$$
\g_B=
\left\{ \begin{pmatrix}
X_1    & 0      & \hdots & 0    &   0 \\
0      & X_2    & \hdots &  0   &   0 \\
\vdots & \vdots & \ddots & \vdots  &  \vdots \\
0      &  0     &  \hdots   &  X_n  & 0   \\
 0      &   0     &     \hdots      &   0    &  0
\end{pmatrix} :X_i \mbox{ in } \b(\h_i) \mbox{ for } i=1,\dots ,n \right\}.
$$

If we consider the map $E:\g_A \to \g_B$, $X \mapsto \sum_{i=1}^{n}p_iXp_i$ and $\sigma=(\cdot)^{*-1}$ it is easily verified that $(G_A,G_B;E,\sigma)$ is a reductive structure with involution. Note that $\|E\|=1$. 
\end{ejem}

\begin{defi}
If $(G_A,\sigma)$ is a symmetric Banach-Lie group we say that a connected subgroup $G_B\subseteq G_A$ is \textit{\textbf{involutive}} if $\sigma(G_B)=G_B$.
\end{defi}

\begin{rem}
If $G_B\subseteq G_A$ is an involutive Banach-Lie subgroup with Banach-Lie algebra $\g_B\subseteq \g_A$ and $\g_A = \p \oplus \u$ is the eigenspace decomposition of $\sigma_{*1}$, we can write $\g_B=\p_B\oplus \u_B$, where $\p_B:=\p\cap \g_B$ and $\u_B:=\u\cap \g_B$.
\end{rem}

\begin{prop}\label{subgrupoinv}
Given a Finsler symmetric space 
$$
M_A=G_A/U_A=Sym(G_A,\sigma,\|\cdot\|_{\p})
$$
of semi-negative curvature, if $G_B$ is an involutive subgroup, then 
$$
M_B=G_B/U_B=Sym(G_B,\sigma|_{G_B},\|\cdot\|_{\p_B})
$$
is a Finsler symmetric space of semi-negative curvature.
Also, the inclusion $\Gamma_B \subseteq \Gamma_A \cap \p_B$ holds. In particular, if $M_A$ is simply connected then $M_B$ is also simply connected.
\end{prop}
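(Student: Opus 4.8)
The plan is to check in turn that $M_B$ is a Finsler symmetric space, that it inherits semi-negative curvature through Neeb's dissipativity criterion (Theorem~\ref{csn}), and that $\Gamma_B\subseteq\Gamma_A\cap\p_B$. For the first point, $\sigma|_{G_B}$ is an involutive automorphism of the connected Banach-Lie group $G_B$ (this uses $\sigma(G_B)=G_B$), its fixed point subgroup is $U_B=U_A\cap G_B$ with Banach-Lie algebra $\u_B=\u\cap\g_B$, and the $(-1)$-eigenspace of $\sigma_{*1}|_{\g_B}$ is $\p_B=\p\cap\g_B$. Splitting $X\in\g_B$ into the $\pm1$ eigencomponents of $\sigma_{*1}$, which remain in $\g_B$ because $\sigma_{*1}(\g_B)=\g_B$, shows $\g_B=\u_B\oplus\p_B$; both summands are closed, so $U_B$ is a Banach-Lie subgroup and $M_B=G_B/U_B$ is a Banach manifold, exactly as in Section~2 for $U_A\subseteq G_A$. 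Since $\p$ is $Ad_{U_A}$-invariant and $\g_B$ is $Ad_g$-invariant for $g\in G_B$, the subspace $\p_B=\p\cap\g_B$ is $Ad_{U_B}$-invariant, so the restriction $\|\cdot\|_{\p_B}$ of $\|\cdot\|_{\p}$ is an $Ad_{U_B}$-invariant norm on $\p_B$, compatible with a chart norm on the tangent space at the base point because $\|\cdot\|_{\p}$ is. Hence $M_B=Sym(G_B,\sigma|_{G_B},\|\cdot\|_{\p_B})$ is a Finsler symmetric space.

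For semi-negative curvature I would use condition~2 of Theorem~\ref{csn}. First, for $X\in\p_B$ the subspace $\p_B$ is invariant under $(ad_X)^2$: from $[\p,\p]\subseteq\u$ and $[\p,\u]\subseteq\p$, together with $\g_B$ being a subalgebra, one gets $[\p_B,\p_B]\subseteq\u\cap\g_B=\u_B$ and $[\p_B,\u_B]\subseteq\p\cap\g_B=\p_B$, so $(ad_X)^2\p_B\subseteq\p_B$; moreover the bracket of $\g_B$ is the restriction of that of $\g_A$, so $(ad_X)^2|_{\p_B}$ is literally the restriction of $(ad_X)^2|_{\p}$ to the closed invariant subspace $\p_B$. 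Now $M_A$ has semi-negative curvature, so by Theorem~\ref{csn} the operator $-(ad_X)^2|_{\p}$ is dissipative for every $X\in\p$, in particular for $X\in\p_B$; by the Lumer--Phillips characterization this means $\|(\lambda+(ad_X)^2)v\|_{\p}\ge\lambda\|v\|_{\p}$ for all $\lambda>0$ and all $v\in\p$, and restricting $v$ to $\p_B$ (on which the norm is $\|\cdot\|_{\p_B}$ and where $(\lambda+(ad_X)^2)v$ remains) shows that $-(ad_X)^2|_{\p_B}$ is dissipative for every $X\in\p_B$. Theorem~\ref{csn} applied to $M_B$ then yields that $M_B$ has semi-negative curvature. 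The point to be careful about is precisely this step: one cannot argue with conditions~3 or~4 of Theorem~\ref{csn} in the same way, since invertibility of a bounded operator is \emph{not} inherited by its restriction to a closed invariant subspace in general, whereas dissipativity transfers verbatim.

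Finally, since $G_B$ is a Banach-Lie subgroup, $e^X\in G_B$ for $X\in\g_B$ and the exponential map of $G_B$ is $Exp|_{\g_B}$; thus if $X\in\p_B$ satisfies $q_B(e^X)=q_B(1)$, i.e.\ $e^X\in U_B\subseteq U_A$, then $q_A(e^X)=q_A(1)$, so $X\in\Gamma_A$, and since also $X\in\p_B$ this gives $\Gamma_B\subseteq\Gamma_A\cap\p_B$. If $M_A$ is simply connected then $\Gamma_A\simeq\pi_1(M_A)=0$ by Theorem~\ref{polar}, hence $\Gamma_B\subseteq\Gamma_A\cap\p_B=\{0\}$ and $\pi_1(M_B)\simeq\Gamma_B=0$, so $M_B$ is simply connected. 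The main obstacle is the curvature step and the correct choice of Neeb's criterion there; everything else is routine bookkeeping about Banach-Lie subgroups and restrictions of exponential maps.
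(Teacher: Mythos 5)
Your proposal is correct and follows essentially the same route as the paper: restrict the $Ad_{U_A}$-invariant norm to $\p_B$, note that $\p_B$ is invariant under $(ad_X)^2$ for $X\in\p_B$ so that dissipativity of $-(ad_X)^2|_{\p}$ passes to the restriction (criterion~2 of Theorem~\ref{csn}), and deduce $\Gamma_B\subseteq\Gamma_A\cap\p_B$ from $Exp_B=Exp_A|_{\g_B}$ and $U_B\subseteq U_A$. Your additional remarks (the bracket computations showing the invariance of $\p_B$, the Lumer--Phillips inequality, and the warning that criteria~3 and~4 would not restrict well) are sound elaborations of steps the paper leaves implicit.
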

\begin{proof}
We can restrict the $Ad_{U_A}$-invariant norm of $M_A=G_A/U_A$ to $\p_B$ to give $M_B=G_B/U_B$ a $Ad_{U_B}$-invariant norm. Since for each $X\in \p$ the operator $-(ad_X)^2|_{\p}$ is dissipative and $-(ad_X)^2|_{\p}(\p_B)\subseteq \p_B$ for all $X\in \p_B$, we conclude that the operator $-(ad_X)^2|_{\p_B}$ is dissipative for all $X\in \p_B$. Therefore $M_B=G_B/U_B=Sym(G_B,\sigma|_{G_B},\|\cdot\|_{\p_B})$ has semi-negative curvature.

If $X \in \Gamma_B$ then $q_B \circ Exp_B (X) = o_B$ so that $Exp_A (X) = Exp_B (X) \in U_B \subseteq U_A$ and $q_A \circ Exp_A = o_A$. We conclude that $\Gamma_B \subseteq \Gamma_A \cap \p_B$.
\end{proof}

\begin{rem}
If $(G_A,G_B;E)$ is a reductive structure, since $Ad_g\circ E=E\circ Ad_g$ for each $g\in G_B$ we see that $Ad_g(KerE)\subseteq KerE$ for every $g\in G_B$.
If $\sigma$ is an involutive automorphism of reductive structures and $\g_A = \u \oplus \p$ is the decomposition into eigenspaces of $\sigma_{*1}$ then $Ad_{U_A}(\p) \subseteq \p$ and $Ad_{U_A}(\u) \subseteq \u$, so that the actions $Ad:U_B \to {\cal B}(\p_E)$ and $Ad:U_B \to {\cal B}(\u_E)$ are well defined, where we denote $\p_E := KerE \cap \p$ and $\u_E := KerE \cap \u$.
\end{rem}

\begin{teo} Extended CPR splitting \label{descpolext}

If for $n \geq 2$ we have the following inclusions of connected Banach-Lie groups, the following maps between its Banach-Lie algebras
$$G_1\subseteq G_2\subseteq \dots  \subseteq G_n$$
$$\g_1 \xleftarrow{E_2} \g_2 \xleftarrow{E_3} \dots  \xleftarrow{E_n} \g_n$$
and a morphism $\sigma : G_n \to G_n$ such that:

\begin{itemize}
\item $(G_n,G_{n-1};E_n,\sigma)$,$(G_{n-1},G_{n-2};E_n,\sigma|_{G_{n-1}})$,\dots , $(G_2,G_1;E_2,\sigma|_{G_2})$ are reductive structures with involution.
\item $M_n=G_n/U_n=Sym(G_n,\sigma,\|\cdot\|)$ is a simply connected Finsler symmetric space of semi-negative curvature.
\item $\|{E_k}\vrule_{\,\p_k}\|=1$ for $k=2,\dots ,n$, where the norm is the norm of the previous item restricted to $\p_k:=\p \cap \g_k$.
\end{itemize}

Then the maps
$$\Phi_n: U_n \times \p_{E_n} \times \dots  \times \p_{E_2} \times \p_1 \to G_n$$
$$(u_n,X_n,\dots ,X_2,Y_1) \mapsto u_ne^{X_n}\dots e^{X_2}e^{Y_1}$$

$$\Psi_n: \p_{E_n} \times \dots  \times \p_{E_2} \times \p_1 \to G_n^+$$
$$(X_n,\dots ,X_2,Y_1) \mapsto e^{Y_1}e^{X_2}\dots e^{X_{n-1}}e^{2X_n}e^{X_{n-1}}\dots e^{X_2}e^{Y_1}$$
are diffeomorphisms, where $\p_{E_k}:=KerE_k\cap \p_k$ for $k=2,\dots ,n$.
\end{teo}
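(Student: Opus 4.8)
The plan is to prove the theorem by induction on $n$, using the base case $n=2$ as a direct application of the Corach-Porta-Recht decomposition (Theorem \ref{cpr}), and reducing each inductive step to another application of the same theorem on the inner group. First I would establish the base case $n=2$. Here we have a reductive structure with involution $(G_2,G_1;E_2,\sigma)$, with $\|E_2|_{\p_2}\|=1$. Set $p := 1 - E_2|_{\p_2} \in {\cal B}(\p_2)$; this is an idempotent with $Ran(p) = \p_{E_2} = KerE_2 \cap \p_2$ and $Ker(p) = Ran(E_2|_{\p_2}) = \p_1$, so $\p_2 = \p_{E_2} \oplus \p_1$. One must check the bracket conditions: since $\p_1 = \p\cap\g_1$ and $\g_1$ is a subalgebra, $[\p_1,\p_1]\subseteq\u_1$ and hence $ad^2_{\p_1}(\p_1)\subseteq\p_1$; the invariance $ad^2_{\p_1}(\p_{E_2})\subseteq\p_{E_2}$ follows from the fact that $E_2$ commutes with $Ad_g$ for $g\in G_1$, so $KerE_2$ is $ad_{\g_1}$-invariant, hence $ad_{\p_1}$-invariant, and $\p_{E_2}= KerE_2\cap\p_2\cap\p$ is preserved. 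Also $\|p\| = \|1 - E_2|_{\p_2}\| = 1$ since $\|E_2|_{\p_2}\|=1$ and $E_2$ is a projection onto a subspace meeting $\p$ (one checks $\|1-E_2|_{\p_2}\| = 1$ as both a complementary projection; this should be verified carefully — it is the one spot where norm-one is genuinely used). Then Theorem \ref{cpr} applied to $M_2 = G_2/U_2$ with this idempotent gives exactly that $\Phi_2:(u,X,Y)\mapsto ue^Xe^Y$ from $U_2\times\p_{E_2}\times\p_1\to G_2$ and $\Psi_2:(X,Y)\mapsto e^Ye^{2X}e^Y$ from $\p_{E_2}\times\p_1\to G_2^+$ are diffeomorphisms.

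For the inductive step, suppose the result holds for $n-1$, i.e. $\Phi_{n-1}: U_{n-1}\times\p_{E_{n-1}}\times\dots\times\p_{E_2}\times\p_1\to G_{n-1}$ is a diffeomorphism. To obtain $\Phi_n$, I would first apply the $n=2$ argument to the outer reductive structure $(G_n,G_{n-1};E_n,\sigma)$: by Proposition \ref{subgrupoinv}, $M_{n-1}=G_{n-1}/U_{n-1}$ is a simply connected Finsler symmetric space of semi-negative curvature (it is involutive since $\sigma$ is a morphism of the reductive structures, so $\sigma(G_{n-1})\subseteq G_{n-1}$), and the CPR theorem applied with the idempotent $1 - E_n|_{\p_n}\in{\cal B}(\p_n)$ — whose range is $\p_{E_n}$ and whose kernel is $\p_{n-1}$ — shows that $(u,X_n,W)\mapsto ue^{X_n}e^W$ is a diffeomorphism from $U_n\times\p_{E_n}\times\p_{n-1}\to G_n$. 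Composing on the $W$-slot with the inductive diffeomorphism $\Phi_{n-1}$ (after noting $\p_{n-1}$ decomposes as $\p_{E_{n-1}}\oplus\dots\oplus\p_{E_2}\oplus\p_1$, which is the telescoping of the kernel decompositions, and that $U_{n-1}\subseteq U_n$) yields $\Phi_n$ as a composition of diffeomorphisms. Wait — the order is slightly subtle: $\Phi_{n-1}$ produces elements $u_{n-1}e^{X_{n-1}}\cdots e^{Y_1}\in G_{n-1}$ with $u_{n-1}\in U_{n-1}$, and we need $ue^{X_n}\cdot(u_{n-1}e^{X_{n-1}}\cdots e^{Y_1})$; absorbing $u_{n-1}$ requires commuting it past $e^{X_n}$, i.e. using $e^{X_n}u_{n-1} = u_{n-1}e^{Ad_{u_{n-1}^{-1}}X_n}$ with $Ad_{u_{n-1}^{-1}}X_n\in\p_{E_n}$ (since $\p_{E_n}$ is $Ad_{U_{n-1}}$-invariant, as noted in the remark preceding the theorem). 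Careful bookkeeping of this reparametrization is needed, but it is a diffeomorphism of the parameter space, so the composite $\Phi_n$ remains a diffeomorphism.

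For $\Psi_n$, I would derive it from $\Phi_n$ via the diffeomorphism $\phi: G_n/U_n\to G_n^+$, $gU_n\mapsto g\sigma(g)^{-1}$, equivalently the map $g\mapsto g g^*$ on $G_n$ descending to $G_n^+$. Applying this to $g = u_n e^{X_n}\cdots e^{X_2}e^{Y_1}$: since $u_n u_n^* = 1$ and each $e^{X_k}$, $e^{Y_1}$ is $\sigma$-anti-fixed (i.e. $(e^{X})^* = e^{X}$ for $X\in\p$), we get $g\sigma(g)^{-1} = u_n e^{X_n}\cdots e^{Y_1}(e^{Y_1})^*\cdots(e^{X_n})^* u_n^* = u_n e^{X_n}\cdots e^{X_2}e^{2Y_1}e^{X_2}\cdots e^{X_n}u_n^*$. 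Hmm, this gives a $U_n$-conjugate of the desired expression rather than the expression itself; the correct route is instead: $G_n^+$ is in bijection with the parameters $(X_n,\dots,X_2,Y_1)$ by composing the quotient $G_n\to G_n/U_n\cong G_n^+$ with $\Phi_n$ and noting that the $U_n$-factor is exactly the fiber of the quotient map, so $G_n^+\cong \p_{E_n}\times\dots\times\p_{E_2}\times\p_1$ via $g U_n\mapsto$ parameters. Tracking which product of exponentials represents the class, one uses the identity $e^{X_2}\cdots e^{X_n}U_n$ has the form $e^{Y_1}e^{X_2}\cdots e^{2X_n}\cdots e^{X_2}e^{Y_1}$ after applying $\phi$ — this is the nested/palindromic shape in the statement, and it is exactly how $\Psi_2(X,Y)=e^Ye^{2X}e^Y$ arises from $\Phi_2$ in Theorem \ref{cpr}. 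Iterating this identification through the chain gives $\Psi_n$, and since each stage is a diffeomorphism, so is $\Psi_n$.

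The main obstacle I anticipate is the bookkeeping in the inductive step: correctly identifying the idempotent at each stage, verifying that the bracket conditions $ad^2_{\p_1}(\p_1)\subseteq\p_1$ and $ad^2_{\p_1}(\p_{E_k})\subseteq\p_{E_k}$ transfer through the chain (these follow from $KerE_k$ being $Ad_{G_{k-1}}$-invariant and $\g_{k-1}$ being a subalgebra, but must be assembled for the nested kernels $\p_{E_n}\oplus\dots\oplus\p_{E_2}\oplus\p_1 = \p_{n-1}$ viewed inside $\p_n$), and handling the $Ad_{U}$-reparametrizations when sliding unitary factors past the $e^{X_k}$'s. The norm-one hypotheses $\|E_k|_{\p_k}\|=1$ feed directly into the $\|p\|=1$ requirement of Theorem \ref{cpr} at each stage, so no new analytic input is required beyond what the CPR theorem already packages; the proof is essentially a careful iterated application of Theorem \ref{cpr} together with Proposition \ref{subgrupoinv}.
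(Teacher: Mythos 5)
Your overall strategy (induction on $n$, Theorem \ref{cpr} at each stage, absorbing the inner unitary via $e^{X_n}u_{n-1}=u_{n-1}e^{Ad_{u_{n-1}^{-1}}X_n}$) is the same as the paper's, but your base case is set up with the wrong idempotent, and as written that step fails. In Theorem \ref{cpr} the middle slot of $\Phi$ ranges over $\s'=Ker(p)$ and the last slot over $\s=Ran(p)$, and the hypotheses are $ad^2_{\s}(\s)\subseteq\s$, $ad^2_{\s}(\s')\subseteq\s'$ and $\|p\|=1$. To obtain $(u,X,Y)\mapsto ue^Xe^Y$ with $X\in\p_{E_2}$ and $Y\in\p_1$ you must take $p=E_2|_{\p_2}$, so that $\s=\p_1$, $\s'=\p_{E_2}$; then the bracket conditions are exactly the ones you verified, and $\|p\|=1$ is literally the hypothesis $\|E_2|_{\p_2}\|=1$, with nothing left to check. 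With your choice $p=1-E_2|_{\p_2}$ the theorem would instead produce the factors in the reversed order $ue^{Y_1}e^{X_2}$, would require $ad^2_{\p_{E_2}}$-conditions that are not available (since $KerE_2$ is not a subalgebra), and would require $\|1-E_2|_{\p_2}\|=1$, which is not among the hypotheses and does not follow from $\|E_2|_{\p_2}\|=1$ (a complementary projection of a norm-one projection in a Banach space can have norm up to $2$); the ``careful verification'' you defer cannot be carried out. The fix is simply to use the other idempotent, which is what the paper does.

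The treatment of $\Psi_n$ is also left unresolved. As you noticed, forming $gg^*$ from $g=u_ne^{X_n}\dots e^{X_2}e^{Y_1}$ yields $u_ne^{X_n}\dots e^{2Y_1}\dots e^{X_n}u_n^{-1}$, a conjugated and wrongly-ordered expression; but your proposed repair through $G_n^+\simeq G_n/U_n$ does not remove the conjugation, since the identification is again $gU_n\mapsto g\sigma(g)^{-1}=gg^*$. The paper's fix is short: given $p_n\in G_n^+$ write $p_n=g_ng_n^*$ and apply $\Phi_n$ to $g_n^{*}$, so $g_n^*=u_ne^{X_n}\dots e^{X_2}e^{Y_1}$ and $p_n=(g_n^*)^*g_n^*=e^{Y_1}e^{X_2}\dots e^{2X_n}\dots e^{X_2}e^{Y_1}$, which is exactly the palindromic form and gives surjectivity; injectivity follows because equal values force the corresponding $g_n=e^{Y_1}\dots e^{X_n}$ to differ by a right unitary factor, after which injectivity of $\Phi_n$ applies; and smoothness of $\Psi_n^{-1}$ is obtained by factoring its components through the submersion $g\mapsto g^*g$, not by the assertion ``each stage is a diffeomorphism''. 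Your inductive step for $\Phi_n$ is essentially the paper's argument, though where you say ``careful bookkeeping of this reparametrization is needed'' the paper actually does it: injectivity is proved by applying the uniqueness of the two-step and $(n-1)$-step splittings, and the smooth inverse is written down explicitly using the polar decomposition (Theorem \ref{polar}) in $G_{n-1}$ to produce the intermediate data smoothly.
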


\begin{proof}
Note that Prop. \ref{subgrupoinv} implies that $M_k:=G_k/U_k$ are simply connected Finsler symmetric spaces of semi-negative curvature for  $k=2,\dots ,n$.
We prove the statement about the map $\Phi$ for the case $n = 2$ and then prove the statement for $n > 2$ by induction.

Since $E_2\circ\sigma_{*1}=\sigma_{*1}\circ E_2$, $E_2(\p_2)\subseteq \p_2$, we can consider $p:=E_2\vrule_{\,\p_2}: \p_2 \to \p_2$. We see that $\|p\|=1$ and $Ker(p)=Ran(1-p)= \p_{E_2}$. Also, since $E_2^2=E_2$ and $Ran(E_2)=\g_1$, $Ran(p)= \p_1$. The condition $ad^2_{\p_1}(\p_1)\subseteq \p_1$ of the statement of the CPR splitting \ref{cpr} is trivial. Also note that for every $g\in G_1$ and for every $X\in \g_2$, $Ad_g(E_2(X))=E_2(Ad_g(X))$. If $Y\in \g_1$ and we differentiate $Ad_{e^{tY}}(E_2(X))=E_2(Ad_{e^{tY}}(X))$ at $t=0$ we get $ad_Y(E_2(X))=E_2(ad_Y(X))$ and therefore $ad_{\g_1}(KerE_2) \subseteq KerE_2$. Since $ad^2_{\p_2}(\p_2)\subseteq \p_2$ we conclude that $ad^2_{\p_1}(\p_{E_2})\subseteq \p_{E_2}$.
The CPR splitting (Th. \ref{cpr}) implies the existence of a diffeomorphism
$$\Phi_2: U_2 \times \p_{E_2} \times \p_1 \to G_2$$
$$(u_2,X_2,Y_1) \mapsto u_2e^{X_2}e^{Y_1}.$$
Assume now that $n > 2$ and that the theorem is true for $k=n-1$ and $k=2$.
We prove that $\Phi_n$ is surjective. If $g_n \in G_n$ then the splitting theorem applied to the reductive structure $(G_n,G_{n-1};E_n)$ implies the existence of $u_n \in U_n$, $X_n \in \p_{E_n}$ and $Y_{n-1}$ such that $g_n = u_ne^{X_n}e^{Y_{n-1}}$. Since $e^{Y_{n-1}} \in G_{n-1}$ applying the splitting theorem in the case $k=n-1$ we get $u_{n-1} \in U_{n-1}$, $X_{n-1}\in \p_{E_{n-1}}$,\dots , $X_2 \in \p_{E_2}$ and $Y_1 \in \p_1$ such that $e^{Y_{n-1}} = u_{n-1}e^{X_{n-1}}\dots e^{X_2}e^{Y_1}$. Then
$$g_n= u_ne^{X_n}e^{Y_{n-1}}=u_ne^{X_n}u_{n-1}e^{X_{n-1}}\dots e^{X_2}e^{Y_1}=u_nu_{n-1}e^{Ad_{u_{n-1}^{-1}}X_n}e^{X_{n-1}}\dots e^{X_2}e^{Y_1}$$
is in the image of $\Phi_n$ because $Ad_{u_{n-1}^{-1}}X_n \in \p_{E_n}$.

We prove that $\Phi_n$ is injective. Assume that
$$u_ne^{X_n}e^{X_{n-1}}\dots e^{X_2}e^{Y_1}=u_n'e^{X_n'}e^{X_{n-1}'}\dots e^{X_2'}e^{Y_1'}.$$
Since $e^{X_{n-1}}\dots e^{X_2}e^{Y_1}\in G_{n-1}$ there are $u_{n-1} \in U_{n-1}$ and $Y_{n-1} \in \p_{n-1}$ such that
$$u_{n-1}e^{Y_{n-1}}=e^{X_{n-1}}\dots e^{X_2}e^{Y_1}.$$
Also there are $u_{n-1}' \in U_{n-1}$ and $Y_{n-1}' \in \p_{n-1}$ such that
$$u_{n-1}'e^{Y_{n-1}'}=e^{X_{n-1}'}\dots e^{X_2'}e^{Y_1'}.$$
Then
$$u_nu_{n-1}e^{Ad_{u_{n-1}}^{-1}X_n}e^{Y_{n-1}}=u_n'u_{n-1}'e^{Ad_{{u'}_{n-1}^{-1}}X_n'}e^{Y_{n-1}'}$$
and because of the uniqueness of the splitting theorem for $k=2$ we conclude that
\begin{eqnarray} \label{ig}
u_nu_{n-1}&=&u_n'u_{n-1}' \nonumber \\
Ad_{u_{n-1}^{-1}}X_{n}&=&Ad_{{u'}_{n-1}^{-1}}X_{n}' \\
Y_{n-1}&=&Y_{n-1}'. \nonumber
\end{eqnarray}
Since $u_{n-1}e^{Y_{n-1}}=e^{X_{n-1}}\dots e^{X_2}e^{Y_1}$ and $u'_{n-1}e^{Y_{n-1}'}=e^{X_{n-1}'}\dots e^{X_2'}e^{Y_1'}$
$$u_{n-1}^{-1}e^{X_{n-1}}\dots e^{X_2}e^{Y_1}=e^{Y_{n-1}}=e^{Y_{n-1}'}={u'}_{n-1}^{-1}e^{X_{n-1}'}\dots e^{X_2'}e^{Y_1'}$$
the uniqueness of the splitting theorem for $k = n-1$ implies that $u_{n-1}= u_{n-1}'$, $X_{n-1}=X_{n-1}' $,\dots , $X_2=X_2' $ and $Y_1=Y_1' $.
The equalities in (\ref{ig}) say that $u_n= u_n'$ and $X_n=X_n' $ also hold.

We prove that $\Psi_n$ is bijective based on the fact that $\Phi_n$ is bijective. If $p_n\in G^+_A$ then $p_n=g_ng_n^*$ for some $g_n\in G_n$. Because $\Phi_n$ is surjective there are $u_{n} \in U_{n}$, $X_{n}\in \p_{E_{n}}$,\dots , $X_2 \in \p_{E_2}$ and $Y_1 \in \p_1$ such that $g_n^{*} = u_{n}e^{X_{n}}\dots e^{X_2}e^{Y_1}$. Then $p_n=g_ng_n^*=e^{Y_1}e^{X_2}\dots e^{2X_n}\dots e^{X_2}e^{Y_1}$ and we conclude that $\Psi_n$ is surjective.
To see that $\Psi_n$ is injective let assume that  $e^{Y_1}e^{X_2}\dots e^{2X_n}\dots e^{X_2}e^{Y_1}=e^{Y'_1}e^{X'_2}\dots e^{2X'_n}\dots e^{X'_2}e^{Y'_1}$. If $g_n:=e^{Y_1}e^{X_2}\dots e^{X_n}$ and ${g'}_n:=e^{Y'_1}e^{X'_2}\dots e^{X'_n}$ then $g_ng_n^*={g'}_n{g'}_n^*$ and therefore there is an $u_n\in U_n$ such that $g_nu_n=g'_n$. Then $u_ne^{X_{n}}\dots e^{X_2}e^{Y_1}=e^{X'_{n}}\dots e^{X'_2}e^{Y'_1}$ and we conclude that $(X_n,\dots ,X_2,Y_1)=(X'_n,\dots ,X'_2,Y'_1)$.

We prove that $\Phi_n$ is a diffeomorphism by induction. The CPR splitting states that $\Phi_2$ is a diffeomorphism. Assume that $n>2$ and that $\Phi_{n-1}$ is a diffeomorphism. If $g_n \in G_n$ then $g_n=u_n(g_n)e^{X_n(g_n)}e^{Y_{n-1}(g_n)}$, where $(u_n,X_n,Y_{n-1}): G_n \to U_n \times \p_{E_n} \times \p_{n-1}$ is smooth because the inverse of the CPR splitting is smooth in the case $n=2$. If we denote $f(g_n):=e^{Y{n-1}(g_n)}$ then $f$ is a smooth map and 
$$f(g_n)=u_{n-1}(f(g_n))e^{X_{n-1}(f(g_n))}\dots e^{X_2(f(g_n))}e^{Y_1(f(g_n))}$$
where 
$$(u_{n-1},X_{n-1},\dots ,X_{2},Y_1): G_{n-1}\to U_{n-1}\times \p_{E_{n-1}} \times \dots  \times \p_{E_2} \times \p_1$$
is a smooth map. Since 
$$g_n=u_n(g_n)e^{X_n(g_n)}u_{n-1}(f(g_n))e^{X_{n-1}(f(g_n))}\dots e^{X_2(f(g_n))}e^{Y_1(f(g_n))}=$$
$$u_n(g_n)u_{n-1}(f(g_n))e^{Ad_{u_{n-1}^{-1}(f(g_n))}X_n(g_n)}e^{X_{n-1}(f(g_n))}\dots e^{X_2(f(g_n))}e^{Y_1(f(g_n))}$$ 
we get that $\Phi_n^{-1}: G_n \to U_n \times \p_{E_n} \times \dots  \times \p_{E_2} \times \p_1$ 
$$
g_n \mapsto (u_n(g_n)u_{n-1}(f(g_n)),Ad_{u_{n-1}^{-1}(f(g_n))}X_n(g_n),\dots ,X_2(f(g_n)),Y_1(f(g_n)))
$$ 
is smooth.

We prove next that $\Psi^{-1}=(\overline{X_n},\dots ,\overline{X_2},\overline{Y_1})$ is smooth. Let $g_n\in G_n$, then if $p_n=g_n^*g_n$, 
$$
p_n=e^{(\overline{Y}_1(p_n))}e^{(\overline{X}_2(p_n))}\dots e^{(\overline{X}_{n-1}(p_n))}e^{(2\overline{X}_n(p_n))}e^{(\overline{X}_{n-1}(p_n))}\dots e^{(\overline{X}_2(p_n))}e^{(\overline{Y}_1(p_n))}.
$$ 
Since $g_n=u_n(g_n)e^{X_n(g_n)}\dots e^{X_2(g_n)}e^{Y_1(g_n)}$ where $\Phi^{-1}=(u_n,X_n,\dots ,X_2,Y_1)$, we get 
$$p_n:=g_n^*g_n=e^{Y_1(g_n)}e^{X_2(g_n)}\dots e^{X_{n-1}(g_n)}e^{2X_n(g_n)}e^{X_{n-1}(g_n)}\dots e^{X_2(g_n)}e^{Y_1(g_n)}$$
so that 
$$(\overline{X_n},\dots ,\overline{X_2},\overline{Y_1})=(X_n,\dots ,X_2,Y_1) \circ \pi$$
where $\pi: G_n \to G_n^+, g_n\to g_n^*g_n$. Since $\pi$ is a submersion we conclude that $\Psi^{-1}=(\overline{X}_n,\dots ,\overline{X}_2,\overline{Y}_1)$ is smooth. 

\end{proof}

\begin{rem}
We note that in the context of the previous theorem, if $F_{k,j}:=E_{j+1} \circ \dots  \circ E_{k}$, then $(G_k,G_j;F_{k,j})$ is a reductive structure and $\|{F_{k,j}}\vrule_{\,\p_k}\|=1$.
\end{rem}

\begin{rem}
The splitting theorem of Porta and Recht \cite{portarecht} asserts that if we have a unital inclusion of $C^*$-algebras $B \subseteq A$ and a conditional expectation $E: A\to B$ then the map
$$\Phi: U_A \times \p_E \times \p_B \to G_A$$
$$(u,X,Y) \mapsto ue^Xe^Y$$
is a diffeomorphism, where $\p_E$ are the self-adjoint elements of $Ker E$ and $\p_B$ are the self-adjoint elements of $B$.  

Theorem \ref{descpolext} in the case $n=2$ is a formulation of the CPR splitting (Theorem \ref{cpr}) in the context of reductive structures. The Porta-Recht splitting theorem is a special case of the previous theorem if we consider $(G_A,G_B;E,\sigma)$ derived from the triple $(A,B;E)$ as in Example \ref{condexp} and verify that the conditions of the theorem are satisfied because of what was stated in Example \ref{calg}. The CPR theorem covers the case where the inclusion of algebras and the map $E$ are not unital, as in Example \ref{corners} of reductive structures. It also covers the case where the symmetric space and reductive structure are derived from unitized ideals of operators as in Example \ref{ideal} and Example \ref{idealcond}, see \cite{condelarotonda}.

The CPR theorem in the context of several reductive structures (Theorem \ref{descpolext}) covers for example the case of multiple unital inclusions of $C^*$-algebras and conditional expectations between them 
$$A_1\subseteq A_2\subseteq \dots  \subseteq A_n$$
$$A_1 \xleftarrow{E_2} A_2 \xleftarrow{E_3} \dots  \xleftarrow{E_n} A_n.$$
\end{rem}

\section{Complexifications of homogeneous spaces}
 
Proposition \ref{comp} to Remark \ref{tan} here are extensions of Section 5 of \cite{beltitagale}, from the context of $C^*$-algebras to the context of Finsler symmetric spaces of semi-negative curvature with reductive structures.

\begin{defi}\label{defcom}
Let $X$ be a Banach manifold. A \textit{\textbf{complexification}} of $X$ is a complex Banach manifold Y endowed with an anti-holomorphic involutive diffeomorphism $\sigma$ such that the fixed point submanifold $Y_0 = \{y\in Y:\sigma(y)=y\}$ is a strong deformation retract of $Y$ and is diffeomorphic to $X$. 
\end{defi}

\begin{ejem}
Let $M=G/U=Sym(G,\sigma,\|\cdot \|)$ be a simply connected Finsler symmetric space of semi-negative curvature. Theorem \ref{polar} guaranties that $U$ is a strong deformation retract of $G$. If $G$ is a complex analytic group and $\sigma$ is anti-holomorphic, then $G$ is a complexification of $U$.
In the context of $C^*$-algebras the group of invertible elements $G$ is a complexification of the group of unitary elements $U$ with $\sigma=(\cdot)^{-1*}$. Note that $U$ is not a complex analytic manifold.
\end{ejem}

\begin{defi}
Let $(G_A,\sigma)$ be a symmetric Banach-Lie group with involutive subgroup $G_B$. We define $\sigma_G: G_A/G_B \to G_A/G_B$, $uG_B \mapsto \sigma(u)G_B$ and $\lambda: U_A/U_B \hookrightarrow G_A/G_B$, $uU_B \mapsto uG_B$.
\end{defi}

We now give a criteria which implies that $U_A/U_B$ is diffeomorphic to the fixed point set of the involution $\sigma_G$.

\begin{prop}\label{pos}
If $M_A=G_A/U_A=Sym(G_A,\sigma,\|\cdot \|)$ is a Finsler symmetric space of semi-negative curvature, $G_B$ is an involutive subgroup of $G_A$, and  $\Gamma \subseteq \p_B$, then $G_A^+ \cap G_B = G_B^+$.
\end{prop}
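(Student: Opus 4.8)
The plan is to reduce everything to the exponential description of the ``positive cones'' $G^{+}=e^{\p}$ supplied by Corollary \ref{exppos}, and then to bring in the hypothesis $\Gamma\subseteq\p_{B}$ through the rigidity statement of Theorem \ref{polar}(1).

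First I would assemble the tools. Since $G_{B}$ is involutive, Proposition \ref{subgrupoinv} tells us that $M_{B}=G_{B}/U_{B}=Sym(G_{B},\sigma|_{G_{B}},\|\cdot\|_{\p_{B}})$ is again a Finsler symmetric space of semi-negative curvature; hence Theorem \ref{polar} (polar decomposition) and Corollary \ref{exppos} are available for $M_{B}$ just as for $M_{A}$. In particular $G_{A}^{+}=e^{\p}$ and $G_{B}^{+}=e^{\p_{B}}$, where $\p_{B}=\p\cap\g_{B}$. The inclusion $G_{B}^{+}\subseteq G_{A}^{+}\cap G_{B}$ is then immediate: $\p_{B}\subseteq\p$ gives $e^{\p_{B}}\subseteq e^{\p}=G_{A}^{+}$, and $e^{\p_{B}}\subseteq G_{B}$ since $\p_{B}$ is the Lie algebra of $G_{B}$.

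For the reverse inclusion, take $h\in G_{A}^{+}\cap G_{B}$. Using $G_{A}^{+}=e^{\p}$, write $h=e^{X}$ with $X\in\p$; since $\sigma(e^{X})=e^{\sigma_{*1}X}=e^{-X}$, the element $h^{*}:=\sigma(h)^{-1}$ equals $e^{X}=h$. On the other hand, because $h\in G_{B}$ I would apply the polar decomposition of Theorem \ref{polar}(2) \emph{inside} $G_{B}$ to obtain $Z\in\p_{B}$ and $v\in U_{B}$ with $h=e^{Z}v$; then, using $\sigma(v)=v$ and $\sigma(e^{Z})=e^{-Z}$, we get $hh^{*}=e^{Z}v\,(e^{Z}v)^{*}=e^{Z}v\,v^{-1}e^{Z}=e^{2Z}$. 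Comparing the two expressions, $e^{2X}=h^{2}=hh^{*}=e^{2Z}$ as elements of $G_{A}$, so a fortiori $q_{A}(e^{2X})=q_{A}(e^{2Z})$, and Theorem \ref{polar}(1) yields $2(X-Z)\in\Gamma$. The hypothesis $\Gamma\subseteq\p_{B}$ then gives $2X-2Z\in\p_{B}$, and since $Z\in\p_{B}$ we conclude $X\in\p_{B}$, so $h=e^{X}\in e^{\p_{B}}=G_{B}^{+}$.

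I do not expect a genuine obstacle in this argument; the two points that need care are (i) checking that the structural results of Sections~2--3 really transfer from $M_{A}$ to the sub-object $M_{B}$, which is exactly the content of Proposition \ref{subgrupoinv}, and (ii) observing that one obtains the \emph{equality} $e^{2X}=e^{2Z}$ in $G_{A}$ — not merely the weaker equality of classes in $M_{A}$ — so that Theorem \ref{polar}(1) applies and the inclusion $\Gamma\subseteq\p_{B}$ can actually be used. After that the conclusion is purely algebraic.
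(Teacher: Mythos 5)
Your proof is correct and follows essentially the same route as the paper: both reduce to $G_A^{+}=e^{\p}$ and $G_B^{+}=e^{\p_B}$ via Corollary \ref{exppos} and Proposition \ref{subgrupoinv}, take a polar decomposition of the element inside $G_B$, and then use the $\Gamma$-rigidity of Theorem \ref{polar} in $G_A$ together with the hypothesis $\Gamma\subseteq\p_B$ to force the exponent into $\p_B$. The only cosmetic difference is that you eliminate the unitary factor by passing to $hh^{*}=h^{2}=e^{2X}=e^{2Z}$ and invoking the implication $q(e^{X})=q(e^{Y})\Rightarrow X-Y\in\Gamma$, whereas the paper compares the two polar decompositions of the element directly through the fiber description of the polar map; both are the same rigidity argument.
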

\begin{proof}
Since $G_B^+ \subseteq  G_A^+ \cap G_B$ always holds, it is enough to prove that $G_A^+ \cap G_B \subseteq G_B^+$. By Cor. \ref{exppos} $G_A^+ = e^{\p}$ and $G_B^+ = e^{\p_B}$.
If $g \in G_A^+ \cap G_B$ then there is an $X\in \p$ such that $g=e^X$. Since $G_B$ is an involutive subgroup $G_B/U_B$ has semi-negative curvature and using the polar decomposition of Th. \ref{polar} in $G_B$ guaranties the existence of $u\in U_B$ and $Y\in \p_B$ such that $g=ue^Y$. Then, Theorem \ref{polar} applied to $G_A$ tells us that for certain $Z\in \Gamma$, $u=e^Z$ and $Y=X-Z$. Since $\Gamma \subseteq \g_B$ we conclude that $X \in \g_B$ and therefore $g \in G_B^+$.
\end{proof}

\begin{prop}\label{comp}
If $G_B^+ = G_A^+ \cap G_B$, then $\lambda(U_A/U_B) = \{s \in G_A/G_B : \sigma_G (s) = s\}$.
\end{prop}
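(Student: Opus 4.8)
The plan is to prove the set equality by showing the two inclusions separately: $\lambda(U_A/U_B) \subseteq \{s \in G_A/G_B : \sigma_G(s) = s\}$ is essentially formal, while the reverse inclusion is where the hypothesis $G_B^+ = G_A^+ \cap G_B$ is used. For the first inclusion I would simply observe that a point of the left-hand side has the form $uG_B$ with $u \in U_A$, and since $\sigma(u) = u$ we get $\sigma_G(uG_B) = \sigma(u)G_B = uG_B$, so every such point is $\sigma_G$-fixed.

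For the reverse inclusion I would start from $s = gG_B$ with $\sigma_G(s) = s$, which unwinds to $g^{-1}\sigma(g) \in G_B$, and then try to recognize this element as a positive element of $G_A$ so that the hypothesis applies. Writing $g^* := \sigma(g)^{-1}$ as in the paper, the assignment $g \mapsto g^*$ is an involutive anti-automorphism of $G_A$ (it reverses products because $\sigma$ is a homomorphism, and $g^{**} = g$ because $\sigma^2 = \mathrm{id}$), and one checks $\sigma(g) = (g^{-1})^*$. Hence, with $k := g^{-1} \in G_A$,
$$g^{-1}\sigma(g) = g^{-1}(g^{-1})^* = k\,\sigma(k)^{-1} = kk^*,$$
which lies in $G_A^+$ by definition and, by our assumption on $s$, also in $G_B$. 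By the hypothesis of the proposition, $kk^* \in G_A^+ \cap G_B = G_B^+$, so there is $b \in G_B$ with $kk^* = bb^*$.

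To finish I would set $u := b^{-1}k = b^{-1}g^{-1}$ and compute $uu^* = b^{-1}(kk^*)(b^{-1})^* = b^{-1}(bb^*)(b^*)^{-1} = 1$, using $(b^{-1})^* = (b^*)^{-1}$. Since $uu^* = 1$ is equivalent to $\sigma(u) = u$, this gives $u \in U_A$; then $g = u^{-1}b^{-1}$ with $u^{-1} \in U_A$ and $b^{-1} \in G_B$, so $gG_B = u^{-1}G_B = \lambda(u^{-1}U_B)$ belongs to $\lambda(U_A/U_B)$, which completes the argument.

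The bulk of the work is routine bookkeeping with $\sigma$ and $*$; the one place requiring care — what I would regard as the crux — is the identification $g^{-1}\sigma(g) = kk^* \in G_A^+$, since it is precisely this that converts the a priori merely "$G_B$-valued" datum into something to which the hypothesis $G_B^+ = G_A^+ \cap G_B$ (supplied in the applications by Proposition \ref{pos}) can be applied. No smoothness, continuity, or curvature input is needed for this proposition itself.
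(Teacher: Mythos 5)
Your proposal is correct and follows essentially the same route as the paper: identify $g^{-1}\sigma(g)$ as an element of $G_A^+\cap G_B=G_B^+$, write it as $bb^*$ with $b\in G_B$, and absorb $b$ to produce a $\sigma$-fixed representative of the coset (your $u^{-1}=gb$ is exactly the paper's element $uw$, with unitarity checked via $uu^*=1$ instead of directly verifying $\sigma(uw)=uw$). No gaps.
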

\begin{proof}
The inclusion $\subseteq$ is obvious. Given $s=uG_B$ such that $\sigma_G(s)=s$, $u^{-1}\sigma(u) \in G_B$. Since $u^{-1}\sigma(u) \in G_A^+$ the hypothesis $G_B^+ = G_A^+ \cap G_B$ implies that $u^{-1}\sigma(u) \in G_B^+$, and therefore there exists $w\in G_B$ such that $u^{-1}\sigma(u)=ww^*$. Then $uw=\sigma(u)w^{*-1}=\sigma(u)\sigma(w)=\sigma(uw)$, so that $uw\in U_A$ and $s=uG_B=uwG_B=\lambda(uwU_B)$.
\end{proof}

We give a geometric description of the complexification $G_A/G_B$ of $U_A/U_B$ in the context of reductive structures. This can be seen as an infinite dimensional version of Mostow fibration, see \cite{mostow,mostow2} and Section 3 of \cite{bielawski}.

\begin{rem}
Since the actions $Ad:U_B \to {\cal B}(\p_E)$ and $Ad:U_B \to {\cal B}(\u_E)$ are well defined we get the homogeneous vector bundles $U_A \times_{U_B} \p_E \to U_A/U_B$ and $U_A \times_{U_B} \u_E \to U_A/U_B$, $[(u,X)] \mapsto uU_B$, where the actions of $U_B$ on $U_A \times_{U_B} \p_E$ and $U_A \times_{U_B} \u_E$ are given by $v\cdot(u,X) = (uv^{-1},Ad_{v}X)$.
\end{rem}

\begin{teo}\label{com}
Let $M_A=G_A/U_A=Sym(G_A,\sigma,\|\cdot \|)$ be a simply connected Finsler symmetric space of semi-negative curvature  and $(G_A,G_B;E,\sigma)$ a reductive structure with involution such that $\|E\vrule_{\,\p}\|=1$. Consider  $\Psi_0^E: U_A \times \p_E \to G_A$, $(u,X)\mapsto ue^X$ and $\kappa: (u,X)\mapsto [(u,X)]$ the quotient map. Then there is a unique real analytic, $U_A$-equivariant diffeomorphism $\Psi^E: U_A \times_{U_B} \p_E \to G_A/G_B$ such that the diagram
$$
\xy
\xymatrix{
U_A \times \p_E \ar[d]_{\kappa} \ar[r]^{\Psi_0^E} &  G_A \ar[d]^{q}
\\
U_A \times_{U_B} \p_E \ar[r]^{\Psi^E} & G_A/G_B
}\endxy
$$
commutes.

Therefore the homogeneous space $G_A/G_B$ has the structure of an $U_A$-equivariant fiber bundle over $U_A/U_B$ with the projection given by the composition
$$
\xy
\xymatrix{
G_A/G_B  \ar[r]^{(\Psi^E)^{-1}} &  U_A \times_{U_B} \p_E \ar[r]^{\Xi} & U_A/U_B
}\endxy
$$
$$ue^XG_B \mapsto [(u,X)] \mapsto uU_B \qquad \textrm{for $u \in U_A$ and $X \in \p_E$}$$
and typical fiber $\p_E$.
\end{teo}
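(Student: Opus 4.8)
The plan is to obtain $\Psi^E$ by descending the real analytic map $q\circ\Psi_0^E\colon U_A\times\p_E\to G_A/G_B$ through the quotient map $\kappa$, and then to prove it is a diffeomorphism by exhibiting an explicit inverse built from the $n=2$ case of the Extended CPR splitting (Theorem \ref{descpolext}). The two structural inputs I would use are: (i) since $\sigma$ is an involutive morphism of reductive structures, $G_B$ is an involutive subgroup, so by Proposition \ref{subgrupoinv} the space $M_B=G_B/U_B$ is a simply connected Finsler symmetric space of semi-negative curvature, whence Theorem \ref{polar} gives the unique polar factorization $G_B=U_Be^{\p_B}$; and (ii) Theorem \ref{descpolext} with $n=2$, using the hypothesis $\|E\vrule_{\,\p}\|=1$, gives that $\Phi_2\colon U_A\times\p_E\times\p_B\to G_A$, $(u,X,Y)\mapsto ue^Xe^Y$, is a real analytic diffeomorphism; I write $\Phi_2^{-1}=(u_A,X_E,Y_B)$.

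First I would check that $q\circ\Psi_0^E$ is constant on the fibers of $\kappa$, that is, on $U_B$-orbits: for $v\in U_B$ one has $(uv^{-1})e^{Ad_v X}=ue^Xv^{-1}$ with $v^{-1}\in G_B$, so $q(\Psi_0^E(uv^{-1},Ad_vX))=q(\Psi_0^E(u,X))$, where one uses that $\p_E$ is $Ad_{U_B}$-invariant. Since $\kappa$ is a real analytic surjective submersion, being the projection of the principal $U_B$-bundle $U_A\times\p_E\to U_A\times_{U_B}\p_E$, this produces a unique real analytic map $\Psi^E$ with $\Psi^E\circ\kappa=q\circ\Psi_0^E$; uniqueness of $\Psi^E$ is exactly the surjectivity of $\kappa$, and $U_A$-equivariance is immediate from the formula $\Psi^E([(u,X)])=ue^XG_B$.

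The core of the argument is that $\Psi^E$ is bijective with real analytic inverse. For surjectivity, any $g\in G_A$ equals $ue^Xe^Y$ by $\Phi_2$ and $e^Y\in G_B$, so $gG_B=\Psi^E([(u,X)])$. For injectivity, if $ue^XG_B=u'e^{X'}G_B$ write $u'e^{X'}=ue^Xb$ with $b=ve^Y\in U_Be^{\p_B}$; then $u'e^{X'}=(uv)e^{Ad_{v^{-1}}X}e^Y$ with $Ad_{v^{-1}}X\in\p_E$, and the uniqueness part of $\Phi_2$ forces $u'=uv$, $X'=Ad_{v^{-1}}X$ and $Y=0$, that is, $[(u,X)]=[(u',X')]$. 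For the inverse I would show that $\kappa\circ(u_A,X_E)\colon G_A\to U_A\times_{U_B}\p_E$ is constant on right $G_B$-cosets: given $b\in G_B$, set $c:=e^{Y_B(g)}b\in G_B$ and write $c=v'e^{Y'}$ with $v'\in U_B$, $Y'\in\p_B$; then $gb=(u_A(g)v')e^{Ad_{v'^{-1}}X_E(g)}e^{Y'}$, so uniqueness of $\Phi_2$ gives $u_A(gb)=u_A(g)v'$ and $X_E(gb)=Ad_{v'^{-1}}X_E(g)$, which lie in the $U_B$-orbit of $(u_A(g),X_E(g))$. Hence $\kappa\circ(u_A,X_E)$ descends through the submersion $q$ to a real analytic map $F\colon G_A/G_B\to U_A\times_{U_B}\p_E$, and since $\Phi_2^{-1}(ue^X)=(u,X,0)$ one has $F\circ\Psi^E=\mathrm{id}$; together with the bijectivity already shown this gives $F=(\Psi^E)^{-1}$, so $\Psi^E$ is a real analytic diffeomorphism. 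I expect the main obstacle to be precisely this bookkeeping of the $Ad$-twists and the correct invocations of the uniqueness in the CPR splitting; everything else is formal.

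Finally, the fiber bundle statement is transport of structure: $\Xi\colon U_A\times_{U_B}\p_E\to U_A/U_B$, $[(u,X)]\mapsto uU_B$, is the real analytic $U_A$-equivariant homogeneous vector bundle with typical fiber $\p_E$, so $\Xi\circ(\Psi^E)^{-1}$ endows $G_A/G_B$ with an $U_A$-equivariant fiber bundle structure over $U_A/U_B$ with typical fiber $\p_E$, and the displayed composition $ue^XG_B\mapsto[(u,X)]\mapsto uU_B$ is immediate from the definitions.
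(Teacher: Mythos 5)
Your proposal is correct and follows essentially the same route as the paper: descend $q\circ\Psi_0^E$ through $\kappa$, use the $n=2$ Extended CPR splitting for surjectivity and (via the polar decomposition in $G_B$ from Proposition \ref{subgrupoinv}) injectivity, and obtain analyticity of the inverse from the analyticity of $\Phi_2^{-1}$ together with the submersion $q$. The only cosmetic difference is that you build the inverse $F$ first by checking constancy on right $G_B$-cosets and then verify $F\circ\Psi^E=\mathrm{id}$, whereas the paper notes directly that $(\Psi^E)^{-1}\circ q$ coincides with the analytic map $g\mapsto[(u(g),X(g))]$.
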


\begin{proof}
To prove that $\Psi^E$ is well defined we show that for $u\in U_A$, $v\in U_B$ and $X\in \p_E$
\begin{eqnarray}
q(\Psi_0^E(u,X)) &=ue^XG_B=uv^{-1}e^{Ad_vX}vG_B=uv^{-1}e^{Ad_vX}G_B \nonumber \\
&=q(\Psi_0^E(uv^{-1},Ad_vX))=q(\Psi_0^E(v\cdot(u,X))) \nonumber
\end{eqnarray}

The uniqueness of $\Psi^E$ is a consequence of the surjectivity of $\kappa$.

Theorem \ref{descpolext} for the case $n=2$ implies the existence of a diffeomorphism
$$\Phi: U_A \times \p_E \times \p_B \to G_A$$
$$(u,X,Y) \mapsto ue^Xe^Y.$$
If $gG_B\in G_A/G_B$ there is $(u,X,Y) \in U_A \times \p_E \times \p_B$ such that $g=ue^Xe^Y$ and we get $gG_B=ue^Xe^YG_B=ue^XG_B$, proving the surjectivity of $\Phi$.

To see that $\Psi^E$ is also injective assume that $u_1e^{X_1}G_B=u_2e^{X_2}G_B$. Then there is a $b\in G_B$ such that $u_1e^{X_1}b=u_2e^{X_2}$. Since $G_B$ is an involutive connected subgroup of $G_A$ and $G_A/U_A$ has semi-negative curvature, Proposition \ref{subgrupoinv} states that $G_B/U_B$ has also semi-negative curvature and we can apply the polar decomposition (Proposition \ref{polar}) in $G_B$: there are unique $v\in U_B$ and $Y\in \p_B$ such that $b=ve^Y$. Then
$$(u_1v)e^{Ad_{v^{-1}}X_1}e^Y=u_1e^{X_1}ve^Y=u_1e^{X_1}b=u_2e^{X_2}$$
and applying $(\Phi)^{-1}$ to this equality we get $(u_1v,Ad_{v^{-1}}X_1,Y)=(u_2,X_2,0)$, which implies that $v^{-1} \cdot (u_1,X_1)=(u_2,X_2)$.

Finally, we prove that $\Psi^E$ is an analytic diffeomorphism. Since $\kappa$ is a submersion and $\Psi^E \circ \kappa$ ($=q \circ \Psi_0^E$) is a real analytic map $\Psi^E$ is real analytic. Since the map $\Phi^{-1}: g\mapsto (u(g),X(g),Y(g))$ is analytic, the map $\sigma: g \mapsto [(u(g),X(g))]$, $G_A \to U_A \times_{U_B} \p_E$ is also analytic. Since $q$ is a submersion and $\sigma = (\Psi^E)^{-1} \circ q$ we see that $(\Psi^E)^{-1}$ is analytic.
\end{proof}

\begin{coro}
If we analyse the diagram of the previous theorem in the tangent spaces using the following identifications $T_{(1,0)}(U_A \times \p_E) \simeq \u_A \times \p_E$, $T_{[(1,0)]}(U_A \times_{U_B} \p_E) \simeq \u_E \times \p_E$ and $T_o(G_A/G_B) \simeq KerE$ then
$$(\Phi_0^E)_{*(1,0)}: \u_A \times \p_E \to \g_A, \qquad (Y,Z) \mapsto Y+Z$$
$$\kappa_{*(1,0)}: \u_A \times \p_E \to \u_E \times \p_E,\qquad (Y,Z) \mapsto ((1-E)Y,Z)$$
$$q_{*1}: \g_A \mapsto KerE,\qquad W \mapsto (1-E)W$$
and therefore
$$(\Phi^E)_{*[(1,0)]}: \u_E \times \p_E \to KerE,\qquad ((1-E)Y,Z) \mapsto (1-E)(Y+Z)=(1-E)Y + Z.$$
We conclude that
$$(\Phi^E)_{*[(1,0)]}: \u_E \times \p_E \to KerE,\qquad (X,Z) \mapsto X + Z$$
is an isomorphism.
\end{coro}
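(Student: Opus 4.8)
The plan is to differentiate the commuting square of Theorem~\ref{com} at the base points $(1,0)$, $[(1,0)]$, $1$ and $o$, and then to solve for $(\Psi^E)_{*[(1,0)]}$ by a diagram chase, exploiting that $\kappa$ is a submersion so that $\kappa_{*(1,0)}$ is surjective.

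First I would record the three auxiliary differentials. Applying $\Psi_0^E$ to the curve $t \mapsto (e^{tY}, tZ)$ with $(Y,Z) \in \u_A \times \p_E = T_{(1,0)}(U_A \times \p_E)$ gives $t \mapsto e^{tY}e^{tZ} = 1 + t(Y+Z) + O(t^2)$, so $(\Psi_0^E)_{*(1,0)}(Y,Z) = Y+Z$ under the identification $T_1 G_A \simeq \g_A$. Since $q: G_A \to G_A/G_B$ is the natural projection, $q_{*1}$ is the quotient map $\g_A \to \g_A/\g_B$, and because $E$ is a bounded idempotent with range $\g_B$ the decomposition $\g_A = \g_B \oplus KerE$ identifies $\g_A/\g_B$ with $KerE$ via $W \mapsto (1-E)W$; hence $q_{*1}(W) = (1-E)W$. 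For $\kappa$, evaluating on the same curve, the fibre coordinate of $[(e^{tY},tZ)]$ contributes $Z$ while the base point $e^{tY}U_B$ of $U_A/U_B$ contributes the class of $Y$ in $\u_A/\u_B$; as $E$ commutes with $\sigma_{*1}$ it preserves $\u$, so $\u = \u_B \oplus \u_E$ and $\u_A/\u_B \simeq \u_E$ via $Y \mapsto (1-E)Y$, giving $\kappa_{*(1,0)}(Y,Z) = ((1-E)Y, Z)$ under $T_{[(1,0)]}(U_A \times_{U_B} \p_E) \simeq \u_E \times \p_E$.

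With these in hand I would differentiate $q \circ \Psi_0^E = \Psi^E \circ \kappa$ at $(1,0)$, obtaining $q_{*1} \circ (\Psi_0^E)_{*(1,0)} = (\Psi^E)_{*[(1,0)]} \circ \kappa_{*(1,0)}$. The left side sends $(Y,Z)$ to $(1-E)(Y+Z) = (1-E)Y + Z$, using $Z \in \p_E \subseteq KerE$; the right side sends it to $(\Psi^E)_{*[(1,0)]}((1-E)Y,Z)$. Since $(1-E)$ carries $\u_A$ onto $\u_E$ and fixes $\u_E$ pointwise, writing $X := (1-E)Y$ exhausts $\u_E$ and yields $(\Psi^E)_{*[(1,0)]}(X,Z) = X + Z$. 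That this is a topological isomorphism follows at once from Theorem~\ref{com} (because $\Psi^E$ is a diffeomorphism), or directly: $E\sigma_{*1} = \sigma_{*1}E$ forces $KerE = (KerE\cap\u)\oplus(KerE\cap\p) = \u_E \oplus \p_E$, so $(X,Z)\mapsto X+Z$ is precisely this splitting, with bounded inverse given by the spectral projections $\tfrac12(1\pm\sigma_{*1})$.

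The curve computations are routine; the one point deserving care is the identification $T_{[(1,0)]}(U_A\times_{U_B}\p_E) \simeq \u_E \times \p_E$ together with the stated formula for $\kappa_{*(1,0)}$, which rely on the local triviality of the homogeneous bundle $U_A\times_{U_B}\p_E \to U_A/U_B$ noted before the theorem and on the decomposition $\u_A = \u_B \oplus \u_E$. Once the tangent-space identifications are fixed consistently, the diagram chase above gives the claim with no further work.
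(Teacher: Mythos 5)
Your proposal is correct and follows exactly the route the paper intends: the corollary is stated without a separate proof precisely because its content is the differentiation of the commuting square of Theorem \ref{com} at the base points, and your curve computations for $(\Psi_0^E)_{*(1,0)}$, $\kappa_{*(1,0)}$ and $q_{*1}$, followed by the diagram chase using surjectivity of $\kappa_{*(1,0)}$, supply just those details. Your closing observation that $KerE=\u_E\oplus\p_E$ (via $E\sigma_{*1}=\sigma_{*1}E$) makes the isomorphism statement immediate is also the intended justification.
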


\begin{coro}
If we assume the conditions of the previous theorem, the fixed point set of of the involution $\sigma_G$ on $G_A/G_B \simeq U_A \times_{U_B} \p_E$ is diffeomorphic to $U_A/U_B$ and $U_A/U_B$ is a strong deformation retract of $G_A/G_B$. If $G_A$ is a complex analytic group and $\sigma$ is anti-holomorphic then $G_A/G_B$ is a complexification of $U_A/U_B$.

If we define $\tau_G : U_A \times_{U_B} \p_E \to U_A \times_{U_B} \p_E$, $[(u,X)] \mapsto [(u,-X)]$, then the following diagram
$$
\xy
\xymatrix{
U_A \times_{U_B} \p_E \ar[d]_{\Psi^E} \ar[r]^{\tau_G} &  U_A \times_{U_B} \p_E \ar[d]^{\Psi^E}
\\
G_A/G_B \ar[r]^{\sigma_G} & G_A/G_B
}\endxy
$$
commutes.
\end{coro}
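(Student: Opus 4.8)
The plan is to transport everything through the $U_A$-equivariant diffeomorphism $\Psi^E \colon U_A \times_{U_B} \p_E \to G_A/G_B$ of Theorem \ref{com}, under which $\lambda(U_A/U_B)$ corresponds to the zero section. Since $M_A$ is simply connected, $\Gamma = \{0\} \subseteq \p_B$, so Proposition \ref{pos} gives $G_A^+ \cap G_B = G_B^+$ and then Proposition \ref{comp} yields $\lambda(U_A/U_B) = \{ s \in G_A/G_B : \sigma_G(s) = s \}$. Under $(\Psi^E)^{-1}$ the map $\lambda$ sends $uU_B$ to $[(u,0)]$ (because $ue^0 = u$), which is the canonical section of $\Xi \colon U_A \times_{U_B} \p_E \to U_A/U_B$; a section of a fiber bundle is a diffeomorphism onto its image, so the fixed point set of $\sigma_G$ is diffeomorphic to $U_A/U_B$.

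For the strong deformation retraction I would work on $U_A \times_{U_B} \p_E$ and set $H_t([(u,X)]) := [(u,(1-t)X)]$ for $t \in [0,1]$. This is well defined and continuous because $(u,X) \mapsto [(u,(1-t)X)]$ is smooth and $U_B$-invariant (the $Ad_{U_B}$-action being linear commutes with the scaling), while $H_0 = \mathrm{id}$, $H_1$ has image the zero section, and $H_t$ fixes the zero section pointwise; transporting $H$ through $\Psi^E$ exhibits $\lambda(U_A/U_B)$ as a strong deformation retract of $G_A/G_B$. If moreover $G_A$ is complex analytic and $\sigma$ is anti-holomorphic, so that $G_A/G_B$ is a complex Banach manifold and $q$ is holomorphic, then $\sigma_G$ is an involutive diffeomorphism satisfying $\sigma_G \circ q = q \circ \sigma$ and is therefore anti-holomorphic, and the two preceding conclusions verify the remaining requirements of Definition \ref{defcom}; hence $G_A/G_B$ is a complexification of $U_A/U_B$.

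For the commuting square, note first that $\tau_G$ is well defined because $Ad_{U_B}$ acts linearly. Recalling that $\Psi^E([(u,X)]) = ue^X G_B$ from Theorem \ref{com}, and using $\sigma(u) = u$ for $u \in U_A$ together with $\sigma(e^X) = e^{\sigma_{*1}X} = e^{-X}$ for $X \in \p_E \subseteq \p$, one computes
$$\sigma_G(\Psi^E([(u,X)])) = \sigma(ue^X)G_B = ue^{-X}G_B = \Psi^E([(u,-X)]) = \Psi^E(\tau_G([(u,X)])),$$
which is exactly the asserted commutativity. The main content is really Theorem \ref{com} itself; granted that, the corollary is a matter of unwinding definitions, the only care needed being that the auxiliary maps $H_t$ and $\tau_G$ genuinely descend to the associated bundle $U_A \times_{U_B} \p_E$ (where $Ad_{U_B}$-linearity enters) and, for the complexification statement, the tacit hypothesis that $G_B$ is a complex subgroup so that $G_A/G_B$ carries the complex structure for which $q$ is holomorphic — a condition satisfied in all the applications of Section 4.
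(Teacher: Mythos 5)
Your proposal is correct and follows essentially the same route as the paper: it invokes $\Gamma=\{0\}$ with Propositions \ref{pos} and \ref{comp} to identify the fixed point set with $\lambda(U_A/U_B)$, uses the same scaling homotopy on $U_A\times_{U_B}\p_E$ (the paper's $F([(u,X)],t)=[(u,tX)]$, yours reparametrized by $1-t$) transported through $\Psi^E$ for the strong deformation retract, and concludes the complexification statement from the anti-holomorphy of $\sigma_G$. Your explicit check of the commuting square via $\sigma(ue^X)G_B=ue^{-X}G_B$ and your remark that $G_B$ must be a complex subgroup for $G_A/G_B$ to carry the needed complex structure are details the paper leaves implicit, but the argument is the same.
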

\begin{proof}
Note that $\Gamma = \{0\}$ so that Prop. \ref{pos} implies  $G_B^+=G_B \cap G_A^+$ and Prop. \ref{comp} states that $U_A/U_B$ is diffeomorphic to the set of fixed points on $\sigma_G$.

Alternatively, the diagram tells us that the set of fixed points of the involution $\sigma_G$ is $\Psi^E(\{[(u,X)] \in U_A \times_{U_B} \p_E: \tau_G([(u,X)])=[(u,X)]\})=\Psi^E(\{[(u,0)]: u \in U_A \})=\{uG_B: u \in U_A\}=\lambda(U_A/U_B)$.

If we define $F: (U_A \times_{U_B} \p_E) \times [0,1] \to U_A \times_{U_B} \p_E, ([(u,X)],t) \mapsto [(u,tX)]$ we see that $\{[(u,0)]: u \in U_A \}$ is a strong deformation retract of $U_A \times_{U_B} \p_E$ and $\{[(u,0)]: u \in U_A \}$ is diffeomorphic to $U_A/U_B$.

If $\sigma$ is anti-holomorphic then $\sigma_G$ is anti-holomorphic (see \cite{neeb2}) and it follows from Definition \ref{defcom} that $G_A/G_B$ is a complexification of $U_A/U_B$.
\end{proof}

\begin{teo}\label{tang}
If we assume that the conditions of Theorem \ref{com} are satisfied then there is a $U_A$-equivariant diffeomorphic vector bundle map from the associated vector bundle $U_A \times_{U_B} \u_E \to U_A/U_B$ to the tangent bundle $T(U_A/U_B) \to U_A/U_B$ given by $\alpha^E:U_A \times_{U_B} \u_E \to T(U_A/U_B)$, $[(u,X)] \mapsto (\mu_u)_{*o}q_{*1}X$, where the action of $U_A$ on $T(U_A/U_B)$ is given by $u \cdot - = (\mu_u)_*-$ for every $u \in U_A$.
\end{teo}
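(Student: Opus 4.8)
The plan is to build the map $\alpha^E$ explicitly and show it is a well-defined, $U_A$-equivariant vector bundle isomorphism by working fiberwise over $U_A/U_B$. First I would check that $\alpha^E$ is well defined on equivalence classes: for $u \in U_A$, $v \in U_B$ and $X \in \u_E$ one must verify that $(\mu_{uv^{-1}})_{*o}q_{*1}(\mathrm{Ad}_v X) = (\mu_u)_{*o}q_{*1}X$. This follows from the naturality relation $(\mu_h)_{*q(g)}q_{*g} = q_{*hg}(L_h)_{*g}$ recorded in Section 2 together with the identity $q \circ I_v = \mu_v \circ q$ on $U_A$ (since $vU_B = U_B$), differentiated at $1$: one gets $q_{*1}\mathrm{Ad}_v = (\mu_v)_{*o}q_{*1}$, and then $(\mu_{uv^{-1}})_{*o}(\mu_v)_{*o} = (\mu_u)_{*o}$ closes the computation. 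Here I am implicitly using that $\u_E = \mathrm{Ker}E \cap \u$ is $\mathrm{Ad}_{U_B}$-invariant, which is part of the Remark preceding Theorem \ref{com}, and that $q_{*1}$ restricted to the relevant complement of $\u_B$ inside $\u_A$ is injective into $T_o(U_A/U_B)$.

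Next I would identify the fiber. The tangent space $T_{uU_B}(U_A/U_B)$ is $(\mu_u)_{*o}q_{*1}(\u_A)$, and since $q_{*1}$ has kernel exactly $\u_B$ on $\u_A$, it is $(\mu_u)_{*o}q_{*1}$ applied to any complement of $\u_B$ in $\u_A$. The key point is that $\u_E$ \emph{is} such a complement: because $E|_{\u}$ is an idempotent in $\mathcal B(\u)$ with range $\u_B = \mathrm{Ran}E \cap \u$ and kernel $\u_E$, we have $\u_A = \u_E \oplus \u_B$ as a topological direct sum (this uses $E(\u) \subseteq \u$, which holds since $E$ commutes with $\sigma_{*1}$). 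Hence for each fixed $u$ the map $X \mapsto (\mu_u)_{*o}q_{*1}X$ is a bounded linear bijection $\u_E \to T_{uU_B}(U_A/U_B)$ with bounded inverse, i.e. a Banach space isomorphism on fibers; this is exactly the fiberwise statement of a vector bundle isomorphism.

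Then I would assemble this into a global vector bundle morphism. The bundle $U_A \times_{U_B}\u_E \to U_A/U_B$ is the associated bundle of the principal $U_B$-bundle $U_A \to U_A/U_B$, so a $U_A$-equivariant bundle map into $T(U_A/U_B)$ is equivalent to a $U_B$-equivariant smooth map $U_A \times \u_E \to T(U_A/U_B)$ covering $u \mapsto uU_B$ that is linear in the $\u_E$-variable; the map $(u,X) \mapsto (\mu_u)_{*o}q_{*1}X$ is exactly that, its smoothness being that of the tangent map $q_*$ composed with the smooth $U_A$-action on $TM$. Equivariance under $u' \cdot [(u,X)] = [(u'u, X)] \mapsto (\mu_{u'u})_{*o}q_{*1}X = (\mu_{u'})_*(\mu_u)_{*o}q_{*1}X$ is immediate from functoriality of the tangent map. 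Combined with fiberwise bijectivity from the previous step, $\alpha^E$ is a $U_A$-equivariant diffeomorphic vector bundle isomorphism, as claimed. Uniqueness, if desired, follows from surjectivity of the quotient map $U_A \times \u_E \to U_A \times_{U_B}\u_E$.

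The main obstacle is the fiberwise bijectivity in the second step: one must be sure that $\u_E$ is genuinely a \emph{complement} of $\u_B$ in $\u_A$ (not merely a subspace mapping onto the tangent space), and that the inverse map is bounded. This is where the reductive-structure hypotheses do real work — the existence of the continuous projection $E$ with $\mathrm{Ran}E = \g_B$ and the compatibility $E\sigma_{*1} = \sigma_{*1}E$ are precisely what guarantee the topological splitting $\u_A = \u_E \oplus \u_B$; without $E$ one would only know $\u_B$ is complemented in $\u_A$ abstractly, with no canonical complement matching the fibers of the associated bundle. Everything else is bookkeeping with the translation identities from Section 2.
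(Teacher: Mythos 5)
Your proposal is correct and follows essentially the same route as the paper: the same well-definedness computation via the translation identities (in the paper this is the chain $(\mu_{uv^{-1}})_{*o}q_{*1}Ad_vX=(\mu_u q I_v)_{*1}X=(\mu_u)_{*o}q_{*1}X$), the same use of $E\circ\sigma_{*1}=\sigma_{*1}\circ E$ to get the splitting $\u=\u_B\oplus\u_E$ and the identification $\u_E\simeq T_o(U_A/U_B)$, and then bijectivity. The only cosmetic difference is that the paper proves injectivity by an explicit computation producing $v\in U_B$ with $v\cdot(u_2,X_2)=(u_1,X_1)$, while you invoke the standard fact that a bundle morphism over the identity which is fiberwise a Banach-space isomorphism is a bundle isomorphism; both rest on the same fiberwise identification.
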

\begin{proof}
Let $\alpha : U_A \times U_A/U_B \to U_A/U_B$ be given by $(u,vU_B) \mapsto uvU_B$, then $\partial_2\alpha : U_A \times T(U_A/U_B) \to T(U_A/U_B)$, $(u,V) \mapsto (\mu_u)_*V$. Since $E \circ \sigma_{*1} = \sigma_{*1} \circ E$ $E(\u) \subseteq \u$, and since $E(\g_A) = \g_B$ we get the decomposition $\u = \u_B \oplus \u_E$. Then $\u_E \simeq T_o(U_A/U_B)$, $X \mapsto q_{*1}X$ and restricting $\partial_2\alpha$ to $U_A \times T_o(U_A/U_B)$ we get a map $\alpha_0^E: U_A \times \u_E \to T(U_A/U_B)$, $(u,X) \mapsto (\mu_u)_{*o}q_{*1}X$.

We assert that there is a unique $U_A$-equivariant diffeomorphism $\alpha^E:U_A \times_{U_B} \u_E \to T(U_A/U_B)$ such that $\alpha^E \circ \kappa = \alpha_0^E$, where $\kappa$ is the quotient map $(u,X) \mapsto [(u,X)]$.

To prove that $\alpha^E$ is well defined we see that for every $u \in U_A$, $v \in U_B$ and $X \in \u_E$
\begin{eqnarray}
\alpha_0^E(v \cdot(u,X))&=& \alpha_0^E(uv^{-1},Ad_vX)=(\mu_{uv^{-1}})_{*o}q_{*1}Ad_vX \nonumber \\
&=&(\mu_{uv^{-1}})_{*o}q_{*1}(I_v)_{*1}X=(\mu_{uv^{-1}}qI_v)_{*1}X \nonumber \\
&=&(\mu_u\mu_{v^{-1}}qL_vR_{v^{-1}})_{*1}X=(\mu_uqL_{v^{-1}}L_vR_{v^{-1}})_{*1}X \nonumber \\
&=&(\mu_uqR_{v^{-1}})_{*1}X=(\mu_uq)_{*1}=(\mu_u)_{*o}q_{*1}X=\alpha_0^E(u,X) \nonumber
\end{eqnarray}
The uniqueness of $\alpha^E$ is a consequence of the surjectivity of $\kappa$.
$\alpha^E$ is surjective because $(\mu_u)_{*o}:T_o(U_A/U_B) \to T_{q(u)}(U_A/U_B)$ is bijective for every $u \in U_A$.
To see that $\alpha^E$ is injective assume that $(\mu_{u_1})_{*o}q_{*1}X_1=(\mu_{u_2})_{*o}q_{*1}X_2$. Then
$q(u_1)=q(u_2)$ and therefore there is a $v \in U_B$ such that $u_1v=u_2$. Then
\begin{eqnarray}
(\mu_{u_1})_{*o}q_{*1}X_1 &=&(\mu_{u_2})_{*o}q_{*1}X_2=(\mu_{u_1v}q)_{*1}X_2=(\mu_{u_1}\mu_vq)_{*1}X_2 \nonumber \\
&=&(\mu_{u_1}\mu_vqR_{v^{-1}})_{*1}X_2=(\mu_{u_1}qL_vR_{v^{-1}})_{*1}X_2 \nonumber \\
&=&(\mu_{u_1}qI_v)_{*1}X_2=(\mu_{u_1})_{*o}q_{*1}Ad_vX_2 \nonumber
\end{eqnarray}
so that $Ad_vX_2=X_1$ and we conclude that $v \cdot (u_2,X_2) = (u_1,X_1)$.
\end{proof}

\begin{lem}
If $\sigma$ is a anti-holomorphic involutive automorphism of a complex Banach-Lie group $G_A$ then $i\u=\p$.
\end{lem}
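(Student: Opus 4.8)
The plan is to reduce the statement to a one–line eigenvalue computation for the conjugate-linear involution $\sigma_{*1}$ on $\g_A$. First I would record the two structural facts I need. Since $\sigma$ is a group automorphism it fixes the identity, so $\sigma_{*1}:\g_A\to\g_A$ is defined; since $\sigma$ is anti-holomorphic, its differential at $1$ is conjugate-linear, i.e. $\sigma_{*1}(\lambda X)=\overline{\lambda}\,\sigma_{*1}(X)$ for all $\lambda\in\mathbb C$ and $X\in\g_A$; and differentiating $\sigma\circ\sigma=\mathrm{id}_{G_A}$ at $1$ gives $\sigma_{*1}\circ\sigma_{*1}=\mathrm{id}_{\g_A}$. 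Recall that, by definition, $\u$ and $\p$ are the $+1$ and $-1$ eigenspaces of $\sigma_{*1}$.

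Next I would check the two inclusions. For $i\u\subseteq\p$: given $X\in\u$, compute $\sigma_{*1}(iX)=\overline{i}\,\sigma_{*1}(X)=-iX$, so $iX\in\p$. For the reverse inclusion $\p\subseteq i\u$: given $Y\in\p$, set $X:=-iY$ and check $\sigma_{*1}(X)=\overline{-i}\,\sigma_{*1}(Y)=i(-Y)=-iY=X$, so $X\in\u$ and $Y=iX\in i\u$. Combining the two inclusions yields $i\u=\p$.

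The only step that is not completely mechanical is the passage from ``$\sigma$ anti-holomorphic on $G_A$'' to ``$\sigma_{*1}$ conjugate-linear on $\g_A$'', but this is just the standard fact that the differential of an anti-holomorphic map between complex manifolds is conjugate-linear on tangent spaces, applied at the fixed point $1$; once it is in place there is no genuine obstacle. It is worth noting that the conclusion is consistent with the decomposition $\g_A=\u\oplus\p$ recorded earlier, since a conjugate-linear involution always splits the ambient complex space as the direct sum of its fixed real subspace and $i$ times that subspace.
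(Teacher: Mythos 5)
Your proof is correct and follows essentially the same route as the paper: use that anti-holomorphicity makes $\sigma_{*1}$ conjugate-linear, then compute $\sigma_{*1}(iX)=-i\sigma_{*1}X$ on the eigenspaces $\u$ and $\p$ to get both inclusions. You merely spell out the reverse inclusion which the paper leaves as ``proved in a similar way.''
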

\begin{proof}
If $X\in \u$, $\sigma_{*1}X=X$ and $\sigma_{*1}(iX)=-i\sigma_{*1}X=-iX$ so that $iX\in \p$. The other inclusion is proved in a similar way.
\end{proof}

\begin{ejem} 
If $G_A$ is the group of invertible elements of a $C^*$-algebra $A$ then the previous lemma applies and we get $\p=A_s$ the set of self-adjoint elements of $A$ and $\u=i\p=iA_s=A_{as}$ the set of skew-adjoint elements of $A$.
\end{ejem}

\begin{rem}\label{tan}
Assume the conditions of Theorem \ref{com} are satisfied and that $G_A$ is a complex analytic group, $\u =i\p$, and $E$ is $\mathbb{C}$-linear. Since $Ad_g(iX)=iAd(X)$ for every $g\in G_A$ and $X\in \g_A$ we conclude that $\Theta: U_A \times_{U_B} \p_E \to U_A \times_{U_B} \u_E$, given by $[(u,X)] \mapsto [(u,iX)]$ is well defined. Theorem \ref{com} and Theorem \ref{tang} imply that the composition
$$G_A/G_B \xrightarrow{(\Psi^E)^{-1}} U_A \times_{U_B} \p_E \xrightarrow{\Theta} U_A \times_{U_B} \u_E \xrightarrow{\alpha^E} T(U_A/U_B)$$
is a $U_A$-equivariant diffeomorphism between the complexification $G_A/G_B$ and the tangent bundle $T(U_A/U_B)$ of the homogeneous space $U_A/U_B$. Under the above identification the involution $\sigma_G$ is the map $V\mapsto -V$, $T(U_A/U_B)\to T(U_A/U_B)$.         
\end{rem}

The isomorphism in the last remark gives the tangent bundle of $U_A/U_B$ a complex manifold structure which depends on the map $E$.

\medskip

The following proposition shows that the diffeomorphism between $G_A/G_B$ and $T(U_A/U_B)$ respects the natural morphisms that can be defined between homogeneous spaces of the form $G_A/G_B$ and tangent bundles of homogeneous spaces given by $T(U_A/U_B)$.

\begin{prop}
Let $(G_A,G_B;E;\sigma)$ and $(\tilde{G_A},\tilde{G_B};\tilde{E};\tilde{\sigma})$ be reductive structures with involution that satisfy the conditions of the previous remark and let $\alpha: G_A \to \tilde{G_A}$ be a holomorphic morphism of reductive structures with involution. If we define $\alpha_G:G_A/G_B \to G_A/G_B$, $gG_B \mapsto \alpha(g)\tilde{G_B}$ and $\alpha_U:U_A/U_B \to U_A/U_B$, $uU_B \mapsto \alpha(u)\tilde{U_B}$ then the diagram
$$
\xy
\xymatrix{
\ G_A/G_B \ar[d]_{\alpha_G} & U_A \times_{U_B} \u_E \ar[l]_{\sim} \ar[r]^{\sim} & T(U_A/U_B) \ar[d]^{\alpha_{U*}}
\\
\tilde{G_A}/\tilde{G_B} & \tilde{U_A} \times_{\tilde{U_B}} \tilde{\u_E} \ar[l]_{\sim} \ar[r]^{\sim} & T(\tilde{U_A}/\tilde{U_B})
}\endxy
$$
commutes, where the horizontal arrows correspond to the morphisms of Rem. \ref{tan}.
\end{prop}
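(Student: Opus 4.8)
The plan is to supply the middle vertical arrow that the displayed diagram leaves implicit and then check that it makes the two resulting squares commute; since every horizontal arrow is a diffeomorphism, commutativity of the outer rectangle follows formally. First I would record the algebraic content of the hypothesis that $\alpha$ is a holomorphic morphism of reductive structures with involution: from $\alpha(G_B)\subseteq\tilde{G_B}$ and $\alpha\circ\sigma=\tilde\sigma\circ\alpha$ we get $\alpha(U_A)\subseteq\tilde{U_A}$, hence $\alpha(U_B)\subseteq\tilde{U_A}\cap\tilde{G_B}=\tilde{U_B}$; from $\alpha_{*1}\circ\sigma_{*1}=\tilde\sigma_{*1}\circ\alpha_{*1}$ we get $\alpha_{*1}(\u)\subseteq\tilde\u$ and $\alpha_{*1}(\p)\subseteq\tilde\p$; and from $\tilde E\circ\alpha_{*1}=\alpha_{*1}\circ E$ we get $\alpha_{*1}(\ker E)\subseteq\ker\tilde E$, so that $\alpha_{*1}(\u_E)\subseteq\tilde\u_E$ and $\alpha_{*1}(\p_E)\subseteq\tilde\p_E$. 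Since $\alpha$ is a group homomorphism, $\alpha_{*1}\circ Ad_v=Ad_{\alpha(v)}\circ\alpha_{*1}$ for every $v\in U_B$, so the maps
$$\bar\alpha_{\p}: U_A\times_{U_B}\p_E\to\tilde{U_A}\times_{\tilde{U_B}}\tilde\p_E,\qquad [(u,X)]\mapsto[(\alpha(u),\alpha_{*1}X)],$$
and the analogous $\bar\alpha_{\u}$ between the $\u_E$-bundles, are well defined, and they will play the role of the missing middle arrows.

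I would next verify the left square, namely $\alpha_G\circ\Psi^E=\tilde\Psi^{\tilde E}\circ\bar\alpha_{\p}$, where $\Psi^E$ and $\tilde\Psi^{\tilde E}$ are the diffeomorphisms of Theorem \ref{com}. Using the defining relation $\Psi^E\circ\kappa=q\circ\Psi_0^E$ and the surjectivity of $\kappa$, it is enough to evaluate on $\Psi_0^E(u,X)=ue^X$ with $u\in U_A$ and $X\in\p_E$: there $\alpha_G(q(ue^X))=\alpha(ue^X)\tilde{G_B}=\alpha(u)\,e^{\alpha_{*1}X}\,\tilde{G_B}$, using $\alpha\circ Exp=\widetilde{Exp}\circ\alpha_{*1}$ for the Banach-Lie group homomorphism $\alpha$, and this equals $\tilde q\bigl(\tilde\Psi_0^{\tilde E}(\alpha(u),\alpha_{*1}X)\bigr)=\tilde\Psi^{\tilde E}\bigl(\bar\alpha_{\p}[(u,X)]\bigr)$. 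I would also note the compatibility with the complex structures, $\bar\alpha_{\u}\circ\Theta=\tilde\Theta\circ\bar\alpha_{\p}$, which is immediate from $\Theta[(u,X)]=[(u,iX)]$ together with the $\mathbb C$-linearity of $\alpha_{*1}$ (this is where holomorphy of $\alpha$ enters). Combining the two gives commutativity of the left square of the displayed diagram, whose horizontal arrows are $\Psi^E\circ\Theta^{-1}$ and $\tilde\Psi^{\tilde E}\circ\tilde\Theta^{-1}$.

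Then I would verify the right square, $(\alpha_U)_{*}\circ\alpha^E=\tilde\alpha^{\tilde E}\circ\bar\alpha_{\u}$, where $\alpha^E$ and $\tilde\alpha^{\tilde E}$ are the bundle isomorphisms of Theorem \ref{tang}. Here the point is that $\alpha_U$ intertwines the quotient maps and the translations, $\alpha_U\circ q=\tilde q\circ\alpha|_{U_A}$ and $\alpha_U\circ\mu_u=\mu_{\alpha(u)}\circ\alpha_U$ for $u\in U_A$, both immediate from the definitions. Differentiating the second identity at $o$ and the first at $1$, and using $(\alpha|_{U_A})_{*1}=\alpha_{*1}|_{\u}$, one obtains for $X\in\u_E$
$$(\alpha_U)_{*}\bigl((\mu_u)_{*o}\,q_{*1}X\bigr)=(\mu_{\alpha(u)})_{*o}\,\tilde q_{*1}\,\alpha_{*1}X=\tilde\alpha^{\tilde E}\bigl([(\alpha(u),\alpha_{*1}X)]\bigr)=\tilde\alpha^{\tilde E}\bigl(\bar\alpha_{\u}[(u,X)]\bigr),$$
which is precisely the right square. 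Finally, chaining these commutativities along the factorization $G_A/G_B\xrightarrow{(\Psi^E)^{-1}}U_A\times_{U_B}\p_E\xrightarrow{\Theta}U_A\times_{U_B}\u_E\xrightarrow{\alpha^E}T(U_A/U_B)$ of Remark \ref{tan} and its tilde counterpart yields $(\alpha_U)_{*}\circ\alpha^E\circ\Theta\circ(\Psi^E)^{-1}=\tilde\alpha^{\tilde E}\circ\tilde\Theta\circ(\tilde\Psi^{\tilde E})^{-1}\circ\alpha_G$, which is the assertion of the proposition.

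I expect the one place that needs genuine care to be the right square: the bookkeeping of base points and tangent maps — keeping straight which translation and which quotient map is differentiated and at which point — together with checking that $\bar\alpha_{\p}$ and $\bar\alpha_{\u}$ really descend to the associated bundles. All the remaining ingredients ($\alpha(G_B)\subseteq\tilde{G_B}$, $\alpha\circ Exp=\widetilde{Exp}\circ\alpha_{*1}$, the intertwining of the adjoint actions, and the $\mathbb C$-linearity of $\alpha_{*1}$) are either explicit hypotheses or standard facts about morphisms of Banach-Lie groups, so I do not anticipate a serious obstacle beyond organizing the diagram chase cleanly.
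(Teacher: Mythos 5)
Your proposal is correct and follows essentially the same route as the paper: both introduce the middle vertical map $[(u,X)]\mapsto[(\alpha(u),\alpha_{*1}X)]$, check it is well defined from $\alpha(U_B)\subseteq\tilde{U_B}$, $\alpha_{*1}(\u_E)\subseteq\tilde{\u}_E$ and the $Ad$-intertwining, prove the left square using $\alpha\circ Exp=\widetilde{Exp}\circ\alpha_{*1}$ together with the holomorphy of $\alpha$ (the $\mathbb{C}$-linearity of $\alpha_{*1}$ through $\Theta$), and prove the right square by differentiating $\alpha_U\circ q=\tilde{q}\circ\alpha$ and $\alpha_U\circ\mu_u=\tilde{\mu}_{\alpha(u)}\circ\alpha_U$. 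Your write-up just makes the two-square diagram chase slightly more explicit than the paper's element-tracking version; no gap.
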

\begin{proof}
Since $\alpha \circ \sigma = \tilde{\sigma} \circ \alpha$, $\alpha(U_B)\subseteq \tilde{U_B}$ and $\alpha_U$ is well defined.
Since $\alpha_{*1} \circ \sigma_{*1} = \tilde{\sigma}_{*1} \circ \alpha_{*1}$, $\alpha_{*1}(\u) \subseteq \tilde{\u}$. Also $E \circ \alpha_{*1} = \alpha_{*1} \circ E$ implies $\alpha_{*1}(KerE) \subseteq Ker\tilde{E}$ so that $\alpha_{*1}(\u_E) \subseteq \tilde{\u_E}$.
Given $u \in U_A$ and $X\in \u_E$, $\alpha(u)\in \tilde{U_A}$ and $\alpha_{*1}X \in \tilde{u_E}$  and we have the following diagram
$$
\xy
\xymatrix{
\ ue^{iX}G_B  \ar@{|->}[d]_{\alpha_G} & [(u,X)] \ar@{|->}[l]^{} \ar@{|->}[r]^{} & (\mu_u)_{*o}q_{*1}X \ar@{|->}[d]^{\alpha_{U*}}
\\
\ \alpha(u)e^{i\alpha_{*1}(X)}\tilde{G_B} & [(\alpha(u),\alpha_{*1}(X))]  \ar@{|->}[l]^{} \ar@{|->}[r]^{} & (\tilde{\mu}_{\alpha(u)})_{*o}\tilde{q}_{*1}\alpha_{*1}X
}\endxy
$$
It is enough to verify that the values in the vertical arrows correspond to the stated morphisms.
$\alpha_G(ue^{iX}G_B)=\alpha(u)e^{\alpha_{*1}(iX)}\tilde{G_B}=\alpha(u)e^{i\alpha_{*1}(X)}\tilde{G_B}$ since $\alpha_{*1}(iX)=i\alpha_{*1}(X)$ because $\alpha$ is holomorphic.
Since $\alpha_U \circ \mu_{u} = \tilde{\mu}_{\alpha(u)} \circ \alpha_U$ and $\tilde{q} \circ \alpha = \alpha_U \circ q$ we get $\alpha_{U*q(u)} (\mu_u)_{*o} q_{*1}X = (\tilde{\mu}_{\alpha(u)})_{*o}  \alpha_{U*o} q_{*1}X = (\tilde{\mu}_{\alpha(u)})_{*o}\tilde{q}_{*1}\alpha_{*1}X$.
\end{proof}

There are two basic examples of homogeneous spaces $U_A/U_B$ in the infinite dimensional context, the flag manifolds and the Stiefel manifolds. Coadjoint orbits are examples of flag manifolds.

\begin{ejem}Flag manifolds

Let $\h$ be a Hilbert space and let $p_i$, $i=1,\dots ,n$ be pairwise orthogonal projections in $\b(\h)$ each with range $\h_i$ such that $\sum_{i=1}^{n}p_i =1$. If we consider the action of the unitary group $U_A$ of $\b(\h)$ on the set of $n$-tuples of pairwise orthogonal projections with sum $1$ given by $u \cdot (q_1,\dots ,q_n)=(uq_1u^*,\dots ,uq_nu^*)$ then the orbit of $(p_1,\dots ,p_n)$ can be considered as an infinite dimensional version of a flag manifold. This orbit is isomorphic to $U_A/U_B$ where
$$
U_B=
\left\{ \begin{pmatrix}
u_1    & 0      & \hdots & 0      \\
0      & u_2    & \hdots &  0     \\
\vdots & \vdots & \ddots & \vdots \\
0      &  0     &  \hdots   &  u_n
\end{pmatrix} :u_i \mbox{ unitary in } \b(\h_i) \mbox{ for } i=1,\dots ,n \right\};
$$
and we write the operators in $\b(\h)=\b(\h_1 \oplus \dots  \oplus \h_n)$ as $n\times n$-matrices with corresponding operator entries. If we consider the group $G_A$ of invertible operators in $\b(\h)$ with the usual involution $\sigma$, the involutive subgroup
$$
G_B=
\left\{ \begin{pmatrix}
g_1    & 0      & \hdots & 0      \\
0      & g_2    & \hdots &  0     \\
\vdots & \vdots & \ddots & \vdots \\
0      &  0     &  \hdots   &  g_n
\end{pmatrix} :g_i \mbox{ invertible in } \b(\h_i) \mbox{ for } i=1,\dots ,n \right\};
$$
and the conditional expectation $E:\g_A \to \g_B$, $X\mapsto \sum_{i=1}^{n}p_iXp_i$ then we are in the context of Example \ref{condexp} and Th. \ref{com}, Th. \ref{tang} and Rem. \ref{tan} give a geometric description of the complexification of the flag manifold.
\end{ejem}

Other examples of flag manifolds in the infinite dimensional context are coadjoint orbits in operator ideals, which now can be described geometrically.

\begin{ejem}Coadjoint orbits

In the setting of Example \ref{idealcond} let $1\le p <\infty$ and $q$ such that $1/p+1/q=1$. The Banach-Lie algebra of the Banach-Lie group $G_{A,p}$ is $\g_{A,p}=A_p$, the ideal of p-Schatten operators ($A_\infty$ is the ideal of compact operators). The Banach-Lie algebra of the real Banach-Lie group $U_{A,p}$ is $\u_{A,p}$, the skew-adjoint p-Schatten operators. The trace provides strong duality pairings $\g_{A,p}^*\simeq \g_{A,q}$ and $\u_{A,p}^*\simeq \u_{A,q}$. 

We denote by $Ad^*:G_{A,p}\mapsto \b(\g_{A,p})$, $Ad^*_g(X)=(Ad_{g^{-1}})^*(X)=gXg^{-1}$ for $g\in G_{A,p}$ and $X\in \g_{A,p}^*\simeq \g_{A,q}$, the coadjoint action of $G_{A,p}$ and by $Ad^*:U_{A,p}\mapsto \b(\u_{A,p})$, $Ad^*_u(X)=(Ad_{u^{-1}})^*(X)=uXu^{-1}$ for $u\in U_{A,p}$ and $X\in \u_{A,p}^*\simeq \u_{A,q}$, the coadjoint action of $U_{A,p}$. 

For a fixed $X \in \u_{A,q}\subseteq \g_{A,q}$ let $\o_G(X)=\{Ad^*_g(X):g\in G_{A,p}\}$ be the coadjoint orbit of $X$ under the action of $G_{A,p}$ and $\o_U(X)=\{Ad^*_u(X):g\in U_{A,p}\}$ be the coadjoint orbit of $X$ under the action of $U_{A,p}$. Since $X$ is a compact skew-adjoint operator it is diagonalizable, i.e. there is a finite or countable sequence of pairwise orthogonal projections $(p_i)_{i=1}^N$ with $N\in \mathbb{N}\cup\{\infty\}$ such that $\sum_{i=1}^{N}p_i=1$ and $X= \sum_{i=1}^{N}\lambda_ip_i$, where $\lambda_i\neq \lambda_j$ for $i\neq j$ and $(\lambda_i)_{i=1}^N\subseteq i\mathbb{R}$. The map $E:Y\mapsto \sum_{i=1}^{N}p_iYp_i$ is a conditional expectation from $A$ onto the $C^*$-subalgebra $B=\{Y\in A: p_iY=Yp_i \mbox{ for all } i\geq 1\}$. This conditional expectation sends trace-class operators to trace-class operators and preserves the trace, so the conditions on $E$ in Example \ref{idealcond} are satisfied.  The coadjoint isotropy group of $X$ for the action of $G_{A,p}$ is $\{g\in G_{A,p}:gXg^{-1}=X\}=G_{B,p}$ and the coadjoint isotropy group of $X$ for the action of $U_{A,p}$ is $\{u\in U_{A,p}:uXu^{-1}=X\}=U_{B,p}$. This follows from the fact that an operator commutes with a diagonalizable operator if and only if it leaves all the eigenspaces of the diagonalizable operator invariant. Thus, making the identifications $\o_G(X)\simeq G_{A,p}/G_{B,p}$ and $\o_U(X)\simeq U_{A,p}/U_{B,p}$, Th. \ref{com}, Th. \ref{tang} and Rem. \ref{tan} give a geometric description of the complexification of the flag manifold; there is a $U_{A,p}$-equivariant diffeomorphic fiber bundle map between $\o_G(X)$ and $T(\o_U(X))$ covering the identity map of $\o_U(X)$. 

For further reading on the coadjoint orbits in the infinite dimensional setting, see Section 7 in \cite{oratiu}.
\end{ejem}

Likewise, it is now possible to give a geometric description of the complexification of the Stiefel manifolds.

\begin{ejem}Stiefel manifolds

Let $\h$ be a Hilbert space and let $p_i$, $i=1,2$ be pairwise orthogonal projections in $\b(\h)$ each with range $\h_i$ such that $p_1+p_2=1$. If we consider the action of the unitary group $U_A$ of $\b(\h)$ on the set of partial isometries given by by $u \cdot v=uv$ then the orbit of $p_1$ can be considered as an infinite dimensional version of a Stiefel manifold. This orbit is isomorphic to $U_A/U_B$ where
$$
U_B=
\left\{ \left(\begin{array}{cc}
1 & 0 \\
\\
0 & u
\end{array}\right):u\mbox{ is unitary in }\b(\h_2) \right\}.
$$
and we write the operator in $\b(\h)=\b(\h_1 \oplus \h_2)$ as $2\times 2$-matrices with corresponding operator entries. If we consider the group $G_A$ of invertible operators in $\b(\h)$ with the usual involution $\sigma$, the involutive subgroup
$$
G_B=
\left\{ \left(\begin{array}{cc}
1 & 0 \\
\\
0 & g
\end{array}\right):g\mbox{ is invertible in }\b(\h_2) \right\}.
$$
and the map $E:\g_A\to \g_B$, $X\mapsto (1-p)X(1-p)$ then we are in the context of Example \ref{corners} and Th. \ref{com}, Th. \ref{tang} and Rem. \ref{tan} give a geometric description of the complexification of the Stiefel manifold.
\end{ejem}

\begin{rem}
The case of the flag manifold with two projections is the infinite dimensional Grassmannian. The case of the Grassmannian where the decomposition of $\h$ is $\h= \mathbb{C}\eta \oplus (\mathbb{C}\eta)^{\perp}$ for a non-zero vector $\eta \in \h$ is the projective space $\mathbb{P}(\h)$.

The special case of the Stiefel manifold where the decomposition of $\h$ is $\h= \mathbb{C}\eta \oplus (\mathbb{C}\eta)^{\perp}$ for a non-zero vector $\eta \in \h$ is the unit sphere in the Hilbert space $\h$.
\end{rem}

\subsection*{Acknowledgements}
I would like to thank Gabriel Larotonda for many useful suggestions and comments.

\end{document}